\documentclass{article}
\usepackage{amsfonts}
\usepackage{amsmath}
\usepackage{amssymb}
\usepackage{amsthm}
\usepackage{latexsym}
\usepackage{url}
\usepackage{verbatim}
\usepackage{amsrefs}
\usepackage{titling}

\usepackage{fancyref}

\theoremstyle{plain}
\newtheorem{theorem}{Theorem}

\newtheorem{lemma}[theorem]{Lemma}

\theoremstyle{definition}
\newtheorem{definition}{\mdseries\scshape Definition}

\theoremstyle{remark}


\newcommand{\set}[1]{\left\{#1\right\}}

\newcommand{\Z}{\mathbb{Z}}
\newcommand{\Zp}{\mathbb{Z}_p}
\newcommand{\Qp}{\mathbb{Q}_p}
\newcommand{\oneunit}[1]{\left\langle#1\right\rangle}
\usepackage{wrapfig}
\usepackage[export]{adjustbox}
\usepackage{color, colortbl}

\definecolor{periwinkle}{rgb}{0.8, 0.8, 1.0}
\definecolor{mistyrose}{rgb}{1.0, 0.89, 0.88}
\definecolor{mossgreen}{rgb}{0.68, 0.87, 0.68}
\definecolor{palegreen}{rgb}{0.6, 0.98, 0.6}
\definecolor{lightgray}{rgb}{0.85,0.85,0.85}

\title{Counting Solutions to Discrete Non-Algebraic Equations Modulo Prime Powers}
\author{Abigail Mann}
\date{May 20, 2016}

\begin{document}

\begin{titlingpage}
    \maketitle
    \begin{abstract}
        As society becomes more reliant on computers, cryptographic security becomes increasingly important. Current encryption schemes include the ElGamal signature scheme, which depends on the complexity of the discrete logarithm problem. It is thought that the functions that such schemes use have inverses that are computationally intractable. In relation to this, we are interested in counting the solutions to a generalization of the discrete logarithm problem modulo a prime power.  This is achieved by interpolating to p-adic functions, and using Hensel’s lemma, or other methods in the case of singular lifting, and the Chinese Remainder Theorem.
    \end{abstract}
\end{titlingpage}

\section{Introduction}

Society has become increasingly reliant on computers for storing information and communicating securely. People expect that the cryptographic schemes currently in use will keep their information confidential and will allow them to verify the authenticity of any piece of information that they see. Public key cryptography schemes involve functions that are easy to compute one way using a publicly available key (to encrypt or verify signatures), but have inverses that are difficult to compute without a private key, so that decryption or creating a signature is only feasible for one user. Cryptographic schemes such as Diffie-Hellman key exchange and ElGamal encryption and signature schemes often use exponential modular mappings like the discrete exponentiation map $f: \Z \rightarrow \Z/p\Z$, where $x \mapsto g^x \pmod{p}$ and $g \in \Z$, $p$ a prime. These are used since they are generally believed to be computationally infeasible to invert for large prime $p$ ~\cite[Chapter 7]{trappe_wash}.

However, the security of these schemes is still being analyzed, since any insight into their structure may reveal a vulnerability. There has been previous  analysis of the maps $x \mapsto g^x \pmod{p}$ and $x \mapsto g^{x^2} \pmod{p}$ using functional graphs in \cite{lindle}, \cite{cloutier}, and \cite{wood}. Camenisch and Stadler look at the double discrete logarithm of $y$, $g^{a^x} \equiv y \pmod{c}$ as well as the $n$th root of the discrete logarithm of $z$, $g^{x^n} \equiv z \pmod{c}$, where $x,a,n,c \in \Z$, and $g,y$ are in a cyclic group $G$, for use in cryptographic signature schemes where there are multiple keys that allow for the revelation of partial information \cite{camen_stad}.

We study the $n$th roots of discrete logarithms 
 in this paper by counting integer solutions to $g^{x^n} \equiv x^k \pmod{p^e}$, where $g,n,k,e \in \Z$, $p$ is a prime, and $p \nmid g$. This may give us some insight into the structure of $n$th roots of discrete logarithms. Although it is not directly used in any cryptographic schemes today, one may be built off of this equation if its structure acts sufficiently random. The idea for counting solutions to these types of congruences was inspired by \cite{hold_rob}, which uses $p$-adic interpolation, Hensel's lemma, and the Chinese remainder theorem. This type of analysis can also be found in \cite{mann_yeoh} and \cite{waldo_zhu}, which applies these methods to the Welch Equation and the Discrete Lambert map. 

In this paper we find that for $x$ in a certain range, we can determine the exact number of solutions to $g^{x^n} \equiv x^k \pmod{p^e}$ when $p \nmid k$ and when $p=k$ and $n=1$. 

\subsection{Terminology and Background}

For this paper, we count solutions to $g^{x^n} \equiv x^k \pmod{p^e}$, where $g, n, k$, and $e$ are fixed integers, $p$ is a prime, and $p \nmid g$.
In order to count solutions to our congruence modulo $p^e$ for all positive integers $e$, we will find $p$-adic integers helpful, since each $p$-adic integer describes our solution modulo $p^e$ for all $e$.
Thus we will be using functions on the $p$-adics, or $\mathbb{Q}_p$, which are the completion of $\mathbb{Q}$ under the $p$-adic metric.
 First, we note the definition of the $p$-adic valuation of a rational number from ~\cite[Section 2.1]{gouvea}.

\begin{definition}
Fix a prime number $p \in \Z$. The $p$-adic valuation on $\Z$ is the function 
$$ v_p : \Z - \{0\} \rightarrow \mathbb{R}$$
defined as follows: for each integer $n \in \Z$, $ n \ne 0$, let $v_p(n)$ be the unique positive integer satisfying 
$$n = p^{v_p(n)}n' \mbox{  with  } p \nmid n'.$$
We extend $v_p$ to the field of rational numbers as follows: if $x = a/b \in \mathbb{Q}^{\times}$, then
$$v_p(x) = v_p(a)-v_p(b),$$
which is well-defined.
\end{definition}

We can now define the $p$-adic absolute value as follows:
 \begin{definition}
For any $x \in \mathbb{Q}$, we define the $p$-adic absolute value of $x$ by 
$$ \lvert x \rvert_p = p^{-v_p(x)}$$
if $x \ne 0$, and we set $\lvert 0 \rvert_p = 0$.
\end{definition}

The completion gives us all the rational $p$-adic numbers, while we need only to use a subset of $\Qp$. From ~\cite[Section 3.3]{gouvea}, we find that the $p$-adic integers $\Zp$ are defined as $$\Z_p = \{x \in \mathbb{Q}_p : \lvert x \rvert_p \leq 1\}.$$

Now that we have defined $\Z_p$, we
let $\mu_{p-1}$
be the set of all $(p-1)$-st roots of unity, where $\mu_{p-1}  \subseteq \Zp^\times$ by \cite[Cor. 4.5.10]{gouvea}. 
As stated in \cite[Cor. 4.5.10]{gouvea},
we can write each element of $\Zp^\times$ uniquely
as an element of  $\mu_{p-1} \times (1+p\Zp)$. 
So for each $x \in \Z_p^\times$  we write $x = \omega(x) \oneunit{x}$ for some 
$\omega(x) \in \mu_{p-1}$ and
$\oneunit{x} \in 1 + p\Zp$. 
 For odd prime $p$, this decomposition defines a character of $\Z_p^{\times}$, 
which is the surjective homomorphism
$$\omega: \Zp^\times \to \mu_{p-1}.$$ 
This character $\omega$ is called the Teichm\"uller character \cite[Section 4.5]{gouvea}.
We will use the factorization of $x$ into $\omega(x)\oneunit{x}$ to aid in our analysis.

Additionally, we will need the $p$-adic exponential and logarithm functions. As in $\mathbb{R}$, we can define the $p$-adic exponential and logarithm functions on certain subsets of the $p$-adic numbers as formal power series:
$$\exp_p(x) = \sum\limits_{n=0}^{\infty}\frac{x^n}{n!},
$$
$$ \log_p(1+x) = \sum\limits_{n=1}^{\infty}\frac{(-1)^{n+1}x^n}{n}.$$
These functions have radii of convergence $\lvert x \rvert_p < p^{-1/(p-1)}$ and $1$, respectively.

It is important to note that the identities $\exp_p(\log_p(1+x))=1+x$ and $\log_p(\exp_p(x))=x$ hold formally, and will also hold functionally when we have convergence. For more on these functions, see ~\cite[Section 4.5]{gouvea}.

Lastly, we will want to use a generalization of Hensel's lemma, which allows the lifting of solutions to congruences modulo $p$ to solutions modulo $p^e$, that applies to the $p$-adics. First, we will need to define a restricted power series. A formal power series is an object of the form $\sum_{i=0}^{\infty}a_ix^i$, where the $a_i$ are unrestricted coefficients, and addition and multiplication are performed similarly to polynomial operations. A restricted power series is a formal power series where $\lim_{i \rightarrow \infty}a_i = 0$. Now we can take this theorem from \cite[Cor. 3.3]{hold_rob}. 
\begin{theorem}\label{Hensel}
Let $f(x)$ be a restricted power series in $\Z_p[[x]]$ and $a$ be in $\Z_p$ such that $\frac{df}{dx}(a)$ is in $\Z_p^{\times}$ and $f(a) \equiv 0 \pmod{p}$. then there exists a unique $x \in \Z_p$ for which $x \equiv a \pmod{p}$ and $f(x) = 0$ in $\Z_p$.
\end{theorem}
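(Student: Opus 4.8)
The plan is to prove this by $p$-adic Newton iteration (successive approximation), the classical route to Hensel's lemma, taking care with the power-series hypotheses. Before the iteration I would record the facts that make everything work for a restricted power series $f(x)=\sum_{i\ge 0}c_i x^i\in\Zp[[x]]$. Since $\lvert c_i\rvert_p\to 0$ and $\lvert x\rvert_p\le 1$ for $x\in\Zp$, the series $f(x)$ converges for every $x\in\Zp$ and defines a continuous function; in fact $x\equiv y\pmod{p^m}$ forces $f(x)\equiv f(y)\pmod{p^m}$, as this already holds term by term. The formal derivative $f'(x)=\sum_{i\ge 1}ic_i x^{i-1}$ is again restricted because $\lvert ic_i\rvert_p\le\lvert c_i\rvert_p$. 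The essential input is the two-term expansion with integral remainder: for all $a,h\in\Zp$,
\[
f(a+h)=f(a)+f'(a)\,h+h^{2}R(a,h),\qquad R(a,h)\in\Zp.
\]
I expect proving this expansion to be the main obstacle: one expands each $(a+h)^{i}$ by the binomial theorem, regroups by powers of $h$, and must check that the rearranged coefficient series still converges and has coefficients in $\Zp$ --- precisely where the hypothesis ``restricted'' is needed. (Alternatively one could truncate $f$ to a polynomial agreeing with it to sufficiently high $p$-adic precision and quote the classical polynomial Hensel lemma; I would mention this as a fallback.)

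Granting the expansion, set $a_0=a$ and $a_{n+1}=a_n-f(a_n)/f'(a_n)$, and prove by induction on $n$ the invariants: (i) $a_{n+1}\equiv a_n\pmod{p^{n+1}}$, so $a_n\equiv a\pmod p$ for every $n$; (ii) $f'(a_n)\in\Zp^{\times}$; and (iii) $v_p(f(a_n))\ge n+1$. Here (ii) follows from (i), since $f'(a_n)\equiv f'(a)\pmod p$ and $f'(a)$ is a unit by hypothesis; in particular the iteration is well defined and $v_p\!\big(f(a_n)/f'(a_n)\big)=v_p(f(a_n))$. For the inductive step write $h_n=-f(a_n)/f'(a_n)$, so $v_p(h_n)\ge n+1$ by (iii); substituting $a_{n+1}=a_n+h_n$ into the expansion and using $f(a_n)+f'(a_n)h_n=0$ gives $f(a_{n+1})=h_n^{2}R(a_n,h_n)$, hence $v_p(f(a_{n+1}))\ge 2(n+1)\ge n+2$, while $a_{n+1}-a_n=h_n$ yields (i) at the next index. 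By (i) the sequence $(a_n)$ is Cauchy, so by completeness of $\Zp$ it converges to some $x\in\Zp$; then $x\equiv a\pmod p$, and $f(x)=\lim_n f(a_n)=0$ because $v_p(f(a_n))\to\infty$ and $f$ is continuous. This produces the desired root.

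For uniqueness, suppose $x,y\in\Zp$ both satisfy $f=0$ and $x\equiv y\equiv a\pmod p$. Factoring the difference, $0=f(x)-f(y)=(x-y)\,w$ where $w=\sum_{i\ge 1}c_i\big(x^{i-1}+x^{i-2}y+\cdots+y^{i-1}\big)\in\Zp$; since $x\equiv y\pmod p$, each parenthesized sum is congruent to $ix^{i-1}\pmod p$, so $w\equiv f'(x)\equiv f'(a)\pmod p$, which is a unit. Hence $w\in\Zp^{\times}$ and $x-y=0$. This gives the uniqueness claim and completes the proof.
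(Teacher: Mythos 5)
Your proof is correct. Note that the paper itself gives no argument for this statement: it is imported verbatim as Corollary 3.3 of Holden--Robinson, so there is no in-paper proof to compare against. What you have written is the standard Newton-iteration proof of Hensel's lemma, correctly adapted to restricted power series: the two facts you isolate --- that a restricted series converges on $\Z_p$ with $x\equiv y\pmod{p^m}$ forcing $f(x)\equiv f(y)\pmod{p^m}$, and the Taylor expansion $f(a+h)=f(a)+f'(a)h+h^2R(a,h)$ with $R(a,h)\in\Z_p$ --- are exactly the points where the hypothesis $\lvert c_i\rvert_p\to 0$ is needed, and your justification (binomial expansion, regrouping by powers of $h$, with the coefficient of $h^j$ being $\sum_i c_i\binom{i}{j}a^{i-j}$, a convergent series in $\Z_p$) goes through. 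The induction invariants are consistent (the base case $v_p(f(a_0))\ge 1$ is the hypothesis $f(a)\equiv 0\pmod p$, and (ii) does follow from (i) because $f'$ is itself restricted), and the uniqueness argument via the factorization $f(x)-f(y)=(x-y)w$ with $w\equiv f'(a)\pmod p$ a unit is clean. The only thing I would tighten is to state explicitly that $w$ converges because its $i$-th term has absolute value at most $\lvert c_i\rvert_p$, but that is a one-line remark, not a gap.
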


With this knowledge in mind, we can now start our analysis. For this paper, we let $n,k,$ and $e$ be integers, $p$ a prime, and $g$ a unit modulo $p$ (i.e. $p\nmid g$, so $g$ has an inverse modulo $p$). We will be counting the integer solutions of the congruence $g^{x^n} \equiv x^k \pmod{p^e}$, or equivalently, the zeros of $f:\Z \rightarrow \Z/p^e\Z$, where $f(x) = g^{x^n} - x^k \pmod{p^e}$. We denote the multiplicative order of $g$ modulo $p$ as $m$.

\section{Periodicity}

The first thing to note about our function $f$ is that it is periodic, since it will restrict the range of $x$ to examine when counting solutions. The theorem in this section describes its periodicity.

\begin{lemma}\label{thm:repetition}$g^{m\cdot p^{e-1}} \equiv 1 \pmod{p^e}$.
\end{lemma}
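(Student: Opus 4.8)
The plan is to prove this by induction on $e$, exploiting the elementary fact that raising a number congruent to $1$ modulo $p^{j}$ to the $p$-th power produces a number congruent to $1$ modulo $p^{j+1}$.

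For the base case $e = 1$, the assertion $g^{m \cdot p^{0}} = g^{m} \equiv 1 \pmod{p}$ is precisely the definition of $m$ as the multiplicative order of $g$ modulo $p$. For the inductive step, assume $g^{m p^{e-1}} \equiv 1 \pmod{p^{e}}$ and write $g^{m p^{e-1}} = 1 + p^{e} s$ with $s \in \Z$. Raising to the $p$-th power and expanding by the binomial theorem gives
$$g^{m p^{e}} = (1 + p^{e} s)^{p} = 1 + p^{e+1} s + \sum_{i=2}^{p} \binom{p}{i} p^{i e} s^{i}.$$
The term $p^{e+1} s$ is $\equiv 0 \pmod{p^{e+1}}$. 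For each $i$ with $2 \le i \le p$ I claim $p^{e+1} \mid \binom{p}{i} p^{i e} s^{i}$: if $p$ is odd then $p \mid \binom{p}{i}$ for $1 \le i \le p-1$, so those terms are divisible by $p^{i e + 1}$, while the $i = p$ term is divisible by $p^{p e}$, and since $e \ge 1$ both $i e + 1$ and $p e$ are at least $e+1$; if $p = 2$ the sum has the single term $i = 2$, namely $2^{2e} s^{2}$, and $2 e \ge e + 1$. Hence $g^{m p^{e}} \equiv 1 \pmod{p^{e+1}}$, which closes the induction.

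There is no real obstacle here; the statement is elementary. The only point needing a moment's attention is the uniform handling of the binomial coefficients $\binom{p}{i}$ for odd $p$ versus the prime $2$, which is why the divisibility estimate $p^{e+1} \mid \binom{p}{i} p^{i e}$ is isolated above. Equivalently, one may package the inductive step as a standalone observation — namely, if $a \equiv 1 \pmod{p^{j}}$ with $j \ge 1$, then $a^{p} \equiv 1 \pmod{p^{j+1}}$ — and apply it $e-1$ times starting from $a = g^{m}$.
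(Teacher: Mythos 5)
Your proof is correct. Note that the paper does not actually prove this lemma at all --- it simply cites the proof of Theorem 1 of the Mann--Yeoh reference --- so your self-contained induction is filling a gap the paper delegates to an external source. The argument you give is the standard one for this fact (that the order of a unit modulo $p^e$ divides $m\cdot p^{e-1}$): the base case is the definition of $m$ as the order of $g$ modulo $p$, and the inductive step is the observation that $a \equiv 1 \pmod{p^j}$ with $j \ge 1$ implies $a^p \equiv 1 \pmod{p^{j+1}}$. Your divisibility bookkeeping is right: the $i=1$ term contributes $p^{e+1}s$, the terms $2 \le i \le p-1$ pick up an extra factor of $p$ from $\binom{p}{i}$ and so are divisible by $p^{ie+1} \ge p^{e+1}$, and the $i=p$ term is divisible by $p^{pe}$ with $pe \ge e+1$; the separate check for $p=2$ is a nice touch, and is actually relevant since this lemma (unlike most of the paper) is stated without the hypothesis $p \ne 2$. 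No issues.
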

This lemma is obtained from the proof of ~\cite[Theorem 1]{mann_yeoh}, and allows us to conclude with the following theorem.

\begin{theorem} \label{thm:xrep}
Fixing all variables except $x$, we have that
$$g^{(x+mp^e)^n}-(x+mp^e)^k  \equiv g^{x^n}-x^k \pmod{p^e}.$$
In other words, $f(x) = f(x+mp^e)$.
\end{theorem}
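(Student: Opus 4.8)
The plan is to split the claimed congruence into its two additive pieces, proving $(x+mp^e)^k \equiv x^k \pmod{p^e}$ and $g^{(x+mp^e)^n} \equiv g^{x^n} \pmod{p^e}$ separately, and then subtract. The first piece is immediate from the binomial theorem: in the expansion of $(x+mp^e)^k$, every term other than the leading term $x^k$ carries a factor of $mp^e$, which is divisible by $p^e$, so all of those terms vanish modulo $p^e$.

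For the second piece, the key point is that the base $g$ is a unit modulo $p^e$, so by Lemma~\ref{thm:repetition} the multiplicative order of $g$ modulo $p^e$ divides $mp^{e-1}$; consequently $g^a \equiv g^b \pmod{p^e}$ whenever $a \equiv b \pmod{mp^{e-1}}$. It therefore suffices to show the exponents agree modulo $mp^{e-1}$, i.e.\ that $(x+mp^e)^n \equiv x^n \pmod{mp^{e-1}}$. Again I would expand $(x+mp^e)^n$ by the binomial theorem: every term other than $x^n$ carries a factor $(mp^e)^j$ with $j \ge 1$, and $mp^e = p\cdot(mp^{e-1})$ is already a multiple of $mp^{e-1}$, so all those terms vanish modulo $mp^{e-1}$. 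Raising $g$ to the two congruent exponents and invoking Lemma~\ref{thm:repetition} gives $g^{(x+mp^e)^n} \equiv g^{x^n} \pmod{p^e}$, and subtracting the two pieces yields $f(x+mp^e) = f(x)$.

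The step I would be most careful with is precisely this passage from congruent exponents to congruent powers of $g$, since it is the only place the arithmetic of $g$ (as opposed to elementary binomial bookkeeping) enters, and it is where Lemma~\ref{thm:repetition} does the real work. It is also worth recording why $mp^e$ is the right period to use: the shift must be divisible by $p^e$ for the $x^k$ term to be left unchanged modulo $p^e$, and divisible by $mp^{e-1}$ for the exponent $x^n$ to be left unchanged modulo the order of $g$; the value $mp^e$ is divisible by both. One should also note that the argument is insensitive to the sign of $x$ (relevant when $n$ is odd and $x<0$): since $g$ is invertible modulo $p^e$, the identity $g^a \equiv g^b$ for $a \equiv b \pmod{mp^{e-1}}$ holds for all integers $a,b$, negative exponents included.
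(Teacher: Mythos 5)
Your proposal is correct and follows essentially the same route as the paper: binomial expansion of $(x+mp^e)^k$ to handle the polynomial term modulo $p^e$, and binomial expansion of the exponent $(x+mp^e)^n$ modulo $mp^{e-1}$ combined with Lemma~\ref{thm:repetition} to handle the exponential term. You merely make explicit the intermediate step (congruent exponents modulo $mp^{e-1}$ give congruent powers of $g$ modulo $p^e$) that the paper's proof leaves implicit, which is a reasonable bit of extra care.
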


\begin{proof}

First, consider $g^{(x+mp^e)^n} \pmod{p^e}$.
We know $$(x+mp^e)^n = \sum_{i=0}^{n}\binom{n}{i}x^i(mp^e)^{n-i}.$$
Since $mp^{e-1}\vert mp^e$ and $mp^e$ divides all terms except $x^n$,  by Lemma~\ref{thm:repetition} we have
$$g^{(x+mp^e)^n} \equiv g^{x^n} \pmod{p^e}.$$

Now consider $(x+mp^e)^k$. We can also expand this to 
$$(x+mp^e)^k = \sum_{i=0}^{k}\binom{k}{i}x^i(mp^e)^{k-i}.$$
Since $p^e \vert mp^e$ and $mp^e$ divides all terms except $x^k$, we have
$$(x+mp^e)^k \equiv x^k \pmod{p^e}.$$

Thus $g^{(x+mp^e)^n}-(x+mp^e)^k  \equiv g^{x^n}-x^k \pmod{p^e}$.

\end{proof}


\section{Interpolation}

 Since we would like to analyze our equation $p$-adically, our first goal is to interpolate our function $f:\Z \rightarrow \Z/p^e\Z$, $f(x) = g^{x^n} -x^k \pmod{p^e}$ to a function from $\Z_p$ to $\Z_p$.

We find that although we cannot interpolate to a single continuous $p$-adic function, we can interpolate to a finite number of $p$-adic functions that agree with $f(x)$ on certain values of $x$.

\begin{theorem}
\label{gouvea-interp}
For $p \neq 2$, let $g \in \Zp^\times$ and $x_0 \in \Z/(p-1)\Z$, and let
$$I_{x_0} = \set{x \in \Z \mid x \equiv x_0
  \pmod{p-1}} \subseteq \Z.$$
Then $$F_{x_0}(x) = \omega(g)^{x_0^n}\oneunit{g}^{x^n}$$ defines a uniformly continuous function
on $\Zp$ such that $F_{x_0}(x) = g^{x^n}$ whenever $x\in I_{x_0}$.
\end{theorem}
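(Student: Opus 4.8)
The plan is to establish the claimed identity $F_{x_0}(x) = g^{x^n}$ on the integers $x \in I_{x_0}$ first, and then argue uniform continuity of the right-hand expression separately. For the first part, I would start from the Teichm\"uller factorization $g = \omega(g)\oneunit{g}$, which gives $g^{x^n} = \omega(g)^{x^n}\oneunit{g}^{x^n}$ for every integer $x$. The point is then to replace the exponent $x^n$ in the $\omega(g)$ factor by $x_0^n$: since $\omega(g) \in \mu_{p-1}$, raising it to a power depends only on that power modulo $p-1$, and for $x \in I_{x_0}$ we have $x \equiv x_0 \pmod{p-1}$, hence $x^n \equiv x_0^n \pmod{p-1}$, so $\omega(g)^{x^n} = \omega(g)^{x_0^n}$. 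Combining these two observations yields $g^{x^n} = \omega(g)^{x_0^n}\oneunit{g}^{x^n} = F_{x_0}(x)$ for all $x \in I_{x_0}$, as desired. Here $\omega(g)^{x_0^n}$ is a fixed $(p-1)$-st root of unity, so it only contributes a constant factor.

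For the continuity claim, I would observe that $\omega(g)^{x_0^n}$ is a constant in $\Zp^\times$, so it suffices to show that $x \mapsto \oneunit{g}^{x^n}$ is uniformly continuous on $\Zp$. Since $\oneunit{g} \in 1 + p\Zp$, write $\oneunit{g} = 1 + pu$ with $u \in \Zp$; then $\log_p(\oneunit{g})$ converges (its argument has $p$-adic absolute value at most $p^{-1} < 1$), and in fact $v_p(\log_p \oneunit{g}) \geq 1$. One can then define $\oneunit{g}^{y} := \exp_p(y \log_p \oneunit{g})$ for $y \in \Zp$, which converges because $v_p(y\log_p\oneunit{g}) \geq 1 > 1/(p-1)$ (using $p \neq 2$), and which agrees with the ordinary power when $y$ is an integer. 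The map $y \mapsto \oneunit{g}^y$ is uniformly continuous on $\Zp$: if $y \equiv y' \pmod{p^N}$ then $y\log_p\oneunit{g} \equiv y'\log_p\oneunit{g} \pmod{p^{N+1}}$, and $\exp_p$ is an isometry-like map on its domain (more precisely $\exp_p(a) \equiv \exp_p(b) \pmod{p^{N+1}}$ when $a \equiv b \pmod{p^{N+1}}$ and both lie in the domain), so $\oneunit{g}^y \equiv \oneunit{g}^{y'} \pmod{p^{N+1}}$. Finally, $x \mapsto x^n$ is a polynomial, hence uniformly continuous from $\Zp$ to $\Zp$, so the composition $x \mapsto \oneunit{g}^{x^n}$ is uniformly continuous, and multiplying by the constant $\omega(g)^{x_0^n}$ preserves this.

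The main obstacle I expect is making the definition and continuity of the exponential $\oneunit{g}^{x^n}$ fully rigorous: one must check that the series $\exp_p(x^n \log_p \oneunit{g})$ actually converges for all $x \in \Zp$ (this is where the hypothesis $p \neq 2$ enters, guaranteeing $1 \geq 1 > 1/(p-1)$ fails only at $p=2$ — more carefully, $v_p(x^n \log_p\oneunit{g}) \geq 1 > \tfrac{1}{p-1}$ precisely when $p > 2$), and that it genuinely agrees with the integer power $\oneunit{g}^{x^n}$ when $x$ is an integer, via the formal identities $\exp_p \log_p(1+x) = 1+x$. Everything else — the reduction of the exponent mod $p-1$, the constancy of the root-of-unity factor, and the polynomial continuity of $x \mapsto x^n$ — is routine once this analytic groundwork is in place.
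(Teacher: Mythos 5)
Your proposal is correct, but it reaches the conclusion by a genuinely different route than the paper. The paper never writes down an explicit formula for $\oneunit{g}^{y}$ at non-integer $y \in \Zp$: it proves uniform continuity of the integer-power map $x \mapsto \oneunit{g}^{x^n}$ on the dense subset $I_{x_0}$ directly via the binomial theorem (writing $x = y + p^N A$ and $\oneunit{g} = 1 + pM$ to show $\lvert \oneunit{g}^{x^n} - \oneunit{g}^{y^n}\rvert_p \le p^{-(N+1)}$), and then invokes the density of $I_{x_0}$ in $\Zp$ together with the extension theorem for uniformly continuous functions on dense subsets to obtain $F_{x_0}$ on all of $\Zp$. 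You instead define the extension explicitly as $\exp_p(x^n \log_p\oneunit{g})$, check convergence using $v_p(\log_p\oneunit{g}) \ge 1 > 1/(p-1)$ for $p$ odd, verify agreement with integer powers via the formal identities, and deduce uniform continuity from the isometric behavior of $\exp_p$ on its domain composed with the $1$-Lipschitz map $x \mapsto x^n$. Your route dispenses entirely with the density argument (which the paper must import from elsewhere) and has the advantage of producing the restricted power series representation that the paper will need anyway in Lemma~\ref{lifting} when applying Hensel's lemma; the cost is that you must do the analytic bookkeeping for $\exp_p$ and $\log_p$ up front, which is exactly where the hypothesis $p \ne 2$ enters. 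The handling of the identity $F_{x_0}(x) = g^{x^n}$ on $I_{x_0}$ via $\omega(g)^{x^n} = \omega(g)^{x_0^n}$ is the same in both arguments. One sentence of yours is garbled (the parenthetical about ``$1 \ge 1 > 1/(p-1)$ fails only at $p=2$''), but your self-correction immediately afterward states the right condition, so this is cosmetic rather than a gap.
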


\begin{proof}
By ~\cite[Proposition 4.6.1]{gouvea}, we need
 $I_{x_0}$ to be dense in $\Z_p$ and for each $F_{x_0}(x)$ be uniformly continuous and bounded.
 We know that if a function $f:\Z_p \rightarrow \mathbb{Q}_p$ is continuous on $\Z_p$, then it is also uniformly continuous and bounded ~\cite[Theorem 4.1.4]{katok}.
Thus, it suffices to show density of $I_{x_0}$, continuity of each $F_{x_0}$ as a function on $I_{x_0}$, and that $F_{x_0}(x)=g^{x^n}$ with the proper conditions on $x$.

We first need to prove density of $I_{x_0}$ in $\Z_p$. This is shown in the proof of ~\cite[Theorem 16]{mann_yeoh} when we let $c=1$.

Now we must show each $F_{x_0}(x) = \omega(g)^{x_0^n}\oneunit{g}^{x^n}$ is uniformly continuous on $I_{x_0}$.
Given $\epsilon>0$, find $N$ such that $p^{-N}<\epsilon$.
 Now if $x,y \in I_{x_0}$ such that 
$$\lvert x-y \rvert_p \leq p^{-N} < p^{-(N-1)} = \delta,$$ then $x=y+p^NA$ for some $A \in \Z$.
 Consider
\begin{eqnarray*}
\lvert\oneunit{g}^{x^n}-\oneunit{g}^{y^n}\rvert_p = \lvert\oneunit{g}^{(y+p^NA)^n}-\oneunit{g}^{y^n}\rvert_p &=&
 \lvert\oneunit{g}^{y^n}\rvert_p \lvert\oneunit{g}^{(y+p^NA)^n-y^n}-1\rvert_p  \\
&=& \lvert\oneunit{g}^{(y+p^NA)^n-y^n}-1\rvert_p,
\end{eqnarray*}
and using the binomial theorem, we get
$$\oneunit{g}^{(y+p^NA)^n-y^n} = \oneunit{g}^{\sum_{i=1}^{n}\binom{n}{i}y^{n-i}(p^NA)^i}.$$
If we factor out $p^N$ from the exponent, we get 
$$\oneunit{g}^{\sum_{i=1}^{n}\binom{n}{i}y^{n-i}(p^NA)^i} = \oneunit{g}^{p^Nb},$$
where $b=\sum_{i=1}^{n}\binom{n}{i}y^{n-i}(p^N)^{i-1}A^i$, which is an integer.
So we have $$\lvert\oneunit{g}^{x^n}-\oneunit{g}^{y^n}\rvert_p  = \lvert\oneunit{g}^{(y+p^NA)^n-y^n}-1\rvert_p 
= \lvert\oneunit{g}^{p^Nb}-1\rvert_p $$
Using the binomial theorem again, and the fact that $\oneunit{g} = 1+pM$, we get
$$(1+pM)^{p^Nb} = 1+p^NbpM+\frac{p^Nb(p^Nb-1)}{2}(pM)^2+\ldots+(pM)^{p^Nb}.$$
Because all terms except for the first are in $p^{N+1}\Z_p$, we see that 
$$\lvert\oneunit{g}^{p^NA}-1\rvert_p \leq p^{-(N+1)} < p^{-N} < \epsilon.$$
So the function mapping $x \rightarrow \oneunit{g}^{x^n}$ is uniformly continuous on $I_{x_0}$ and hence on $\Z_p$ by ~\cite[Thm 4.15]{katok}.
Since each $F_{x_0}(x) = \omega(g)^{x_0^n}\oneunit{g}^{x^n}$
for fixed $x_0$, and $g$,  and $ \omega(g)^{x_0^n}$ is a constant, we have that $F_{x_0}(x)$ is a constant 
times a uniformly continuous function. Hence, each $F_{x_0}(x)$ is uniformly continuous on $\Z_p$ ~\cite[Exercise 89]{katok}.

Lastly, we show that $F_{x_0}(x) = g^{x^n}$ when $x \in I_{x_0}$.
Since $x \equiv x_0 \pmod{p-1}$, we have that
 $$g^{x^n} = \omega(g)^{x^n}\oneunit{g}^{x^n} = \omega(g)^{x_0^n}\oneunit{g}^{x^n} = F_{x_0}(x).$$

\end{proof}

We can extend this theorem to multiples of the order of $g$ modulo $p$:

\begin{theorem}\label{interp}
  For this theorem only, we let $m$ be any multiple of the multiplicative order of $g$ modulo
  $p$, $p \neq 2$, so that $m \mid p-1$.
  Let $g \in \Zp^\times$ and $x_0 \in \Z/m\Z$, and let
$$J_{x_0} = \set{x \in \Z \mid x \equiv x_0
  \pmod{m}} \subseteq \Z.$$
Then $$F_{x_0}(x) = \omega(g)^{x_0^n}\oneunit{g}^{x^n}$$ defines a uniformly continuous function
on $\Zp$ such that $F_{x_0}(x) = g^{x^n}$ whenever $x\in J_{x_0}$.
\end{theorem}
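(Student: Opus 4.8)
The plan is to mirror the proof of Theorem~\ref{gouvea-interp} almost verbatim, since $F_{x_0}$ has exactly the same analytic shape here; the only places that used the modulus $p-1$ are the ones needing attention, and they survive precisely because $m \mid p-1$. By \cite[Proposition 4.6.1]{gouvea} together with \cite[Theorem 4.1.4]{katok}, it suffices to establish three things: density of $J_{x_0}$ in $\Z_p$, uniform continuity of $F_{x_0}$ on $\Z_p$, and the identity $F_{x_0}(x)=g^{x^n}$ for $x\in J_{x_0}$.

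First I would prove density of $J_{x_0}$ in $\Z_p$. Because $m \mid p-1$ we have $\gcd(m,p)=1$, hence $\gcd(m,p^e)=1$ for all $e\ge 1$. Given $z\in\Z_p$ and $e\ge 1$, the Chinese Remainder Theorem yields an integer $x$ with $x\equiv x_0\pmod m$ and $x\equiv z\pmod{p^e}$, so $x\in J_{x_0}$ and $\abs{x-z}_p\le p^{-e}$. Letting $e\to\infty$ gives density; this is the argument of \cite[Theorem 16]{mann_yeoh} with $m$ in place of $p-1$.

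Next, uniform continuity. Here nothing new is needed: $F_{x_0}(x)=\omega(g)^{x_0^n}\oneunit{g}^{x^n}$ is the fixed constant $\omega(g)^{x_0^n}\in\mu_{p-1}\subseteq\Z_p^\times$ times the function $x\mapsto\oneunit{g}^{x^n}$, and the proof of Theorem~\ref{gouvea-interp} shows that $x\mapsto\oneunit{g}^{x^n}$ is uniformly continuous on $\Z_p$ without ever invoking the modulus $p-1$. Since multiplication by a constant preserves uniform continuity \cite[Exercise 89]{katok}, $F_{x_0}$ is uniformly continuous on $\Z_p$.

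Finally I would check agreement on $J_{x_0}$, the one step that genuinely uses the hypothesis on $m$. The crucial observation is that the multiplicative order of $\omega(g)$ in $\mu_{p-1}$ equals the multiplicative order of $g$ modulo $p$: reduction modulo $p$ restricts to an isomorphism $\mu_{p-1}\cong(\Z/p\Z)^\times$ carrying $\omega(g)$ to $g\bmod p$. Since $m$ is a multiple of that order, $\omega(g)^m=1$. Then for $x\in J_{x_0}$ we have $x\equiv x_0\pmod m$, hence $x^n\equiv x_0^n\pmod m$, hence $\omega(g)^{x^n}=\omega(g)^{x_0^n}$; combining this with the factorization $g=\omega(g)\oneunit{g}$ gives
$$g^{x^n}=\omega(g)^{x^n}\oneunit{g}^{x^n}=\omega(g)^{x_0^n}\oneunit{g}^{x^n}=F_{x_0}(x).$$
I do not expect a real obstacle: the entire content is that relaxing $p-1$ to a multiple $m$ of $\ord_p(g)$ with $m \mid p-1$ preserves both the coprimality to $p$ needed for density and the relation $\omega(g)^m=1$ needed for agreement, while the analytic estimates carry over unchanged.
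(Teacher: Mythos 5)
Your proof is correct, and it reaches the same essential fact the paper relies on --- that $\omega(g)^m=1$ because $m$ is a multiple of $\ord_p(g)$ and $\omega(g)$ reduces to $g$ modulo $p$ --- but it packages the argument differently. The paper's proof is a two-line reduction to Theorem~\ref{gouvea-interp}: it observes that $J_{x_0}$ is the union of the classes $I_{x_0'}$ over all $x_0'\in\Z/(p-1)\Z$ with $x_0'\equiv x_0\pmod{m}$, and that the corresponding functions $F_{x_0'}$ all coincide (since $\omega(g)^{x_0'^n}$ depends only on $x_0'\bmod m$), so the single function $F_{x_0}$ agrees with $g^{x^n}$ on each piece and hence on all of $J_{x_0}$; density and uniform continuity are then inherited for free. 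You instead rerun the whole interpolation argument with modulus $m$ in place of $p-1$: CRT density of $J_{x_0}$, the unchanged uniform-continuity estimate, and the direct computation $\omega(g)^{x^n}=\omega(g)^{x_0^n}$. Both are valid; the paper's version is shorter because it reuses Theorem~\ref{gouvea-interp} wholesale, while yours is self-contained and arguably makes it clearer exactly which hypotheses ($\gcd(m,p)=1$ for density, $\omega(g)^m=1$ for agreement) are doing the work. Your justification that $\ord(\omega(g))=\ord_p(g)$ via the injectivity of reduction on $\mu_{p-1}$ is in fact cleaner than the paper's own computation $\omega(g)^{m^n}=\omega(g^{m^n})=\omega(1)=1$, which proves a slightly different identity than the one actually needed.
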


\begin{proof}
Since $g^m \equiv 1 \pmod{p}$, $\omega(g)^{m^n} = \omega(g^{m^n}) = \omega(1) = 1.$ 
If $x_0, x_0' \in \Z/(p-1)\Z$ and $ x_0 \equiv x_0' \pmod{m}$,
then the two functions $F_{x_0}$ and $F_{x_0'}$ given by
Theorem~\ref{gouvea-interp} are equal and are the same as $g^{x^n}$ when
$x \in I_{x_0} \cup  I_{x_0'} \subseteq J_{x_0}$.
\end{proof}



\section{Counting Solutions}

Now that we have our $p$-adic functions, we can use those to begin counting solutions.  We begin by counting solutions to our modified congruences modulo $p$, and then proceed by lifting these solutions to $p$-adic solutions modulo $p^e$. Lastly, we will refer back to our theorems on interpolation to find when the solutions to our modified congruences will give us solutions to our original congruence $g^{x^n} \equiv x^k \pmod{p^e}$.

The following lemma analyzes solutions modulo $p$.

\begin{lemma}\label{countmodp}
Consider the equation
$$g^{x_0^n} \equiv x^k \pmod{p}.$$
Define $d = \frac{\gcd(k,p-1)}{\gcd(k,\frac{p-1}{m})}$, and let
$q_1^{\alpha_1}q_2^{\alpha_2}\cdots q_i^{\alpha_i}$ be the prime factorization of $d$.
Then there are $N = \frac{m\cdot\gcd(k,p-1)}{q_1^{\lceil{\frac{\alpha_1}{n}}\rceil}q_2^{\lceil{\frac{\alpha_2}{n}}\rceil}\cdots q_i^{\lceil{\frac{\alpha_i}{n}}\rceil}}$
solution pairs $(x_0,x)$ to the above equation, where $x_0 \in \{0,1,\ldots,m-1\}$ and $x \in \{0,1,\ldots,p-1\}$.

\end{lemma}

\begin{proof}
Let $h$ be a primitive root modulo $p$, so we can express $g\equiv h^a \pmod{p}$ and $x \equiv h^b \pmod{p}$.
So $g^{x_0^n} \equiv x^k \pmod{p}$ becomes $(h^a)^{x_0^n} \equiv (h^b)^k \pmod{p}.$
Since $h$ is a primitive root, we have that $ax_0^n \equiv bk \pmod{p-1}$. From ~\cite[Theorem 5.1]{andrews},
we have that there are $\gcd(k,p-1)$ mutually incongruent solutions for $b$ (which correspond to a distinct values of $x$)
if $\gcd(k,p-1)\mid ax_0^n$, and no solutions otherwise. So we must now count $x_0$ where $\gcd(k,p-1)\mid ax_0^n$.

We have that $\gcd(k,p-1)\mid ax_0^n$ if and only if $\frac{\gcd(k,p-1)}{\gcd(k,p-1,a)}\mid \frac{a}{\gcd(k,p-1,a)}x_0^n$. Note that
$\gcd(k,p-1,a) = \gcd(\gcd(k,p-1),a)$ so $\frac{\gcd(k,p-1)}{\gcd(k,p-1,a)}$ is relatively prime to $\frac{a}{\gcd(k,p-1,a)}$.
Now we only need to count $x_0$ that satisfy $\frac{\gcd(k,p-1)}{\gcd(k,p-1,a)}\mid x_0^n.$

Because we defined 
 $h$ and $a$ so that $g\equiv h^a \pmod{p}$, and $g$ has order $m$, we know that $\gcd(a,p-1)=\frac{p-1}{m}$. So 
$\gcd(k,p-1,a) = gcd(k,\gcd(a,p-1)) = \gcd(k,\frac{p-1}{m})$. 

Now we are left with counting $x_0$ that satisfy $\frac{\gcd(k,p-1)}{\gcd(k,\frac{p-1}{m})}\mid x_0^n$, which is the same as $d \mid x_0^n$.
In order to count the number of solutions, we look at the prime factorization of $d$. We have that 
$$ q_1^{\alpha_1}q_2^{\alpha_2}\cdots q_i^{\alpha_i} \mid x_0^n \mbox{ if and only if } q_1^{\lceil{\frac{\alpha_1}{n}}\rceil}q_2^{\lceil{\frac{\alpha_2}{n}}\rceil}\cdots q_i^{\lceil{\frac{\alpha_i}{n}}\rceil} \mid x_0,$$ 
and thus we have 
$\frac{m}{q_1^{\lceil{\frac{\alpha_1}{n}}\rceil}q_2^{\lceil{\frac{\alpha_2}{n}}\rceil}\cdots q_i^{\lceil{\frac{\alpha_i}{n}}\rceil}}$ distinct $x_0 \in \{0,1,\ldots, m\}$ that 
satisfy our conditions. Since there are $\gcd(k,p-1)$ solutions $x \in \{0,1,\ldots, p-1\}$ for each $x_0$, we have a total of 
$\frac{m\cdot\gcd(k,p-1)}{q_1^{\lceil{\frac{\alpha_1}{n}}\rceil}q_2^{\lceil{\frac{\alpha_2}{n}}\rceil}\cdots q_i^{\lceil{\frac{\alpha_i}{n}}\rceil}}$
solution pairs $(x_0,x)$ to $g^{x_0^n} \equiv x^k \pmod{p}.$

\end{proof}

\subsection{Counting solutions when $p\nmid k$}

When we lift the solutions we found modulo $p$ to solutions modulo $p^e$, we have to use different methods for when $p \nmid k$ than when $p \mid k$. We will be able to use Hensel's lemma to lift to solutions modulo $p^e$ when $p \nmid k$. The following lemma describes the result.

\begin{lemma} \label{lifting} 
For $p \neq 2$, $p \nmid k$, let $g \in \Zp^\times$ be fixed, and $x_0 \in \{0,1,\cdots,m-1\}$. If $a$ is a solution in $\{0, 1, \ldots, p-1\}$  to
$$\omega(g)^{x_0^n} \equiv g^{x_0^n} \equiv x^k \pmod{p}.$$
Then there is a unique solution in $\Zp$ to the equation 
$$\omega(g)^{x_0^n} \oneunit{g}^{x^n} = x^k $$
where $x \equiv a \pmod{p}$.
\end{lemma}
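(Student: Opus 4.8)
The plan is to apply Hensel's lemma (Theorem~\ref{Hensel}) to the restricted power series
$$f(x) = \omega(g)^{x_0^n}\oneunit{g}^{x^n} - x^k,$$
using $a$ as the approximate root. Concretely, I would verify the three hypotheses of Theorem~\ref{Hensel}: that $f$ is a restricted power series in $\Zp[[x]]$, that $f(a)\equiv 0\pmod p$, and that $\frac{df}{dx}(a)\in\Zp^\times$; the unique $x\in\Zp$ with $x\equiv a\pmod p$ and $f(x)=0$ that Hensel's lemma then produces is exactly the claimed solution.

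The first step is to show $f\in\Zp[[x]]$ and is restricted. Since $p\neq 2$ and $\oneunit{g}\in 1+p\Zp$, the element $\ell:=\log_p\oneunit{g}$ is defined and lies in $p\Zp$, and formally $\oneunit{g}^{x^n} = \exp_p(x^n\ell) = \sum_{j=0}^{\infty}\frac{\ell^j}{j!}x^{nj}$. Using the Legendre bound $v_p(j!)\le\frac{j-1}{p-1}$ together with $v_p(\ell)\ge 1$, one gets $v_p\!\left(\frac{\ell^j}{j!}\right)\ge j-\frac{j-1}{p-1}$, which is positive for $p\ge 3$ and tends to $\infty$; hence $\oneunit{g}^{x^n}$ is a restricted power series with coefficients in $\Zp$. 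Multiplying by the constant $\omega(g)^{x_0^n}\in\mu_{p-1}\subseteq\Zp^\times$ and subtracting the polynomial $x^k$ preserves both properties, so $f$ is a restricted power series in $\Zp[[x]]$. This is the step that needs the most care — controlling the convergence of the $p$-adic exponential applied to $x^n\ell$ and the integrality of the resulting coefficients — and it is where the hypothesis $p\neq 2$ is essential.

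The remaining two hypotheses are short. Note first that $a\neq 0$: otherwise $a^k\equiv 0\pmod p$ while $\omega(g)^{x_0^n}$ is a unit, contradicting the hypothesis on $a$; hence $a\in\Zp^\times$. Since $\oneunit{g}\in 1+p\Zp$ we have $\oneunit{g}^{a^n}\equiv 1\pmod p$, so $f(a)\equiv\omega(g)^{x_0^n}-a^k\equiv 0\pmod p$ by the assumption that $a$ solves $\omega(g)^{x_0^n}\equiv x^k\pmod p$. Differentiating termwise gives $\frac{df}{dx}(x) = \omega(g)^{x_0^n}\oneunit{g}^{x^n}\cdot n x^{n-1}\ell - k x^{k-1}$; because $\ell\in p\Zp$ the first summand lies in $p\Zp$, so $\frac{df}{dx}(a)\equiv -k a^{k-1}\pmod p$, and this is a unit since $p\nmid k$ and $p\nmid a$.

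With all three hypotheses verified, Theorem~\ref{Hensel} yields a unique $x\in\Zp$ with $x\equiv a\pmod p$ and $f(x)=0$, i.e.\ $\omega(g)^{x_0^n}\oneunit{g}^{x^n}=x^k$, as desired. The only genuine obstacle is the first step; once integrality and convergence are in hand, the congruence and derivative checks are immediate.
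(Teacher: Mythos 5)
Your proposal is correct and follows essentially the same route as the paper: both write $\oneunit{g}^{x^n}=\exp_p(x^n\log_p\oneunit{g})$ as a convergent ($p$-integral, restricted) power series, verify $f(a)\equiv\omega(g)^{x_0^n}-a^k\equiv 0\pmod p$ and $f'(a)\equiv -ka^{k-1}\not\equiv 0\pmod p$ using $\log_p\oneunit{g}\in p\Zp$ and $p\nmid k$, $p\nmid a$, and then invoke Theorem~\ref{Hensel}. Your explicit Legendre-type bound on $v_p(\ell^j/j!)$ is a slightly more self-contained justification of the convergence step that the paper handles by citation, but the argument is the same.
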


\begin{proof}
Since
  $\oneunit{g}$ is in $1+p\Zp$, we get
\begin{eqnarray*}
\oneunit{g}^{x^n} &=& (\exp_p(x^n \log_p(\oneunit{g}))  \\
&=&   (1+x^n\log_p(\oneunit{g})+
x^{2n}\log_p( \oneunit{g})^2/2! \\
& & + \mbox{higher order terms in powers of }
\log_p(\oneunit{g})),
\end{eqnarray*}
where from ~\cite[Proposition 4.5.9]{gouvea},  we know that 
$\log_p(\oneunit{g}) \in p\Zp$. Now that we have a convergent
power series since $|\log_p(\oneunit{g})^i/i!|_p \to 0$ as $i \to \infty$ ~\cite[Chapter 2, Theorem 3.1]{bachman}, we 
examine $f(x)=F_{x_0}(x)-x$ and its derivative to see if we can apply a generalization of Hensel's lemma.

Consider
\begin{eqnarray*}
f(x)=\omega(g)^{x_0^n}(1&+&x^n\log_p(\oneunit{g})+
x^{2n}\log_p( \oneunit{g})^2/2! \\
&+& \mbox{higher order terms in powers of }
\log_p(\oneunit{g})) - x^k.
\end{eqnarray*}

Since we know $\log_p{(\oneunit{g})}\in p\Zp$, so $\log_p{(\oneunit{g})} \equiv 0 \pmod{p}$, we have that
\begin{eqnarray*}
f(a)\equiv\omega(g)^{x_0^n}(1&+&a^n(0)+
a^{2n}(0) \\
&+& \mbox{higher order terms congruent to 0 (mod p) }
) -  a^k \pmod{p}
\end{eqnarray*}
\begin{eqnarray*}
\equiv\omega(g)^{x_0^n}-a^k
\equiv 0 \pmod{p}.
\end{eqnarray*}

Additionally, we have that
\begin{eqnarray*}
f'(x)=\omega(g)^{x_0^n}(nx^{n-1}\log_p{(\oneunit{g})}+(2n)x^{2n-1}\log_p{(\oneunit{g})}^2/2! \\
+{3n}x^{3n-1}\log_p{(\oneunit{g})^3/3!}+...)-ka^{k-1}
\end{eqnarray*}
so that
\begin{eqnarray*}
f'(a)&\equiv&\omega(g)^{x_0^n}(na^{n-1}(0)+(2n)a^{2n-1}(0)^2/2!+{3n}a^{3n-1}(0)^3/3!+...)-ka^{k-1}\\
&\equiv& 0-ka^{k-1} \pmod{p}.
\end{eqnarray*}
We know $a^k \equiv \omega(g)^{x_0^n} \pmod{p}$ so then
we know $a^k \not \equiv 0 \pmod{p}$ and thus $a^{k-1} \not \equiv 0 \pmod{p}$.
Also, we have $p \nmid k$.
So then $-ka^{k-1} \not \equiv{0} \pmod{p}$.
Now we know we can apply Theorem \ref{Hensel}, 
 which states that there is a unique $x \in \Z_p$ for which $x \equiv a \pmod{p}$ and $f(x)=0$ in $\Z_p$.

\end{proof}

Now that we have found solutions to our modified equations, we need to be able to piece them together to give us solutions to our original equation. The following theorem uses the results from our lemmas to give us the number of solutions to $g^{x^n} \equiv x^k \pmod{p^e}$ when $p \nmid k$.

\begin{theorem}\label{pnmidk_final}
For $p \not = 2$, let $g \in \Zp^\times$ and $n,k \in \Z$ be fixed and $p \nmid k$. Then there are  $N = \frac{m\cdot\gcd(k,p-1)}{q_1^{\lceil{\frac{\alpha_1}{n}}\rceil}q_2^{\lceil{\frac{\alpha_2}{n}}\rceil}\cdots q_i^{\lceil{\frac{\alpha_i}{n}}\rceil}}$ solutions $x$ to the equation 
$$g^{x^n} \equiv x^k \pmod{p^e}$$
for $x \in \{1,2,\cdots,p^em\}.$ 
\end{theorem}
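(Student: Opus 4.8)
The plan is to stratify the interval $\{1,2,\ldots,p^e m\}$ — which is one full period of $f$ by Theorem~\ref{thm:xrep} — according to the residue $x_0 = x \bmod m$, count the solutions contributed by each stratum using the $p$-adic interpolation and the lifting lemma, and then sum those counts with the help of Lemma~\ref{countmodp}. Fix $x_0 \in \{0,1,\ldots,m-1\}$ and set $\phi_{x_0}(x) = F_{x_0}(x) - x^k = \omega(g)^{x_0^n}\oneunit{g}^{x^n} - x^k$, which is a restricted power series in $\Z_p[[x]]$ by the expansion used in the proof of Lemma~\ref{lifting}. By Theorem~\ref{interp}, an integer $x$ with $x \equiv x_0 \pmod{m}$ satisfies $g^{x^n} = F_{x_0}(x)$, so for such $x$ the congruence $g^{x^n} \equiv x^k \pmod{p^e}$ is equivalent to $\phi_{x_0}(x) \equiv 0 \pmod{p^e}$. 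Since $m \mid p-1$ we have $\gcd(m,p)=1$, so by the Chinese remainder theorem the map $x \mapsto x \bmod p^e$ is a bijection from $\{x \in \{1,\ldots,p^em\} : x \equiv x_0 \pmod m\}$ onto $\Z/p^e\Z$; hence the number of solutions in this stratum is the number of $r \in \Z/p^e\Z$ with $\phi_{x_0}(r) \equiv 0 \pmod{p^e}$, once this is seen to be well defined.

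The first technical point is that $\phi_{x_0}(x) \bmod p^e$ depends only on $x \bmod p^e$. If $x \equiv y \pmod{p^e}$ then $x^n - y^n$ and $x^k - y^k$ lie in $p^e\Z_p$, and, writing $\oneunit{g} = 1+pM$, we get $\oneunit{g}^{x^n}/\oneunit{g}^{y^n} = \oneunit{g}^{x^n - y^n} = \exp_p\!\bigl((x^n - y^n)\log_p\oneunit{g}\bigr) \equiv 1 \pmod{p^{e+1}}$ because $\log_p\oneunit{g} \in p\Z_p$ (so the argument of $\exp_p$ lies in $p^{e+1}\Z_p$, which is inside the radius of convergence for $p\ge 3$). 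Therefore $\phi_{x_0}(x) \equiv \phi_{x_0}(y) \pmod{p^e}$, as needed.

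Next, for each $x_0$ let $c_{x_0}$ be the number of $a \in \{0,1,\ldots,p-1\}$ with $\omega(g)^{x_0^n} \equiv a^k \pmod p$. Lemma~\ref{lifting} (an application of Theorem~\ref{Hensel}) provides, for each such $a$, a unique $\xi_a \in \Z_p$ with $\phi_{x_0}(\xi_a) = 0$ and $\xi_a \equiv a \pmod p$, and its proof also shows $\phi_{x_0}'(a) \in \Z_p^\times$. I then claim the number of $r \in \Z/p^e\Z$ with $\phi_{x_0}(r) \equiv 0 \pmod{p^e}$ is exactly $c_{x_0}$: any such $r$ reduces mod $p$ to one of the $a$'s, and for an integer lift $\tilde r$ of $r$ we have $\lvert \phi_{x_0}'(\tilde r)\rvert_p = 1$ and $\lvert \phi_{x_0}(\tilde r)\rvert_p \le p^{-e}$, so Newton's iteration converges to a root $\xi$ with $\lvert \xi - \tilde r\rvert_p \le \lvert \phi_{x_0}(\tilde r)\rvert_p \le p^{-e}$; by the uniqueness in Theorem~\ref{Hensel}, $\xi = \xi_a$, whence $r \equiv \tilde r \equiv \xi_a \pmod{p^e}$ is determined by $a$. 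Conversely $\xi_a \bmod p^e$ is such an $r$, and distinct $a$'s give distinct residues, so the count is $c_{x_0}$. Summing over $x_0$, and using $g \equiv \omega(g) \pmod p$ so that $c_{x_0}$ is exactly the number of $a$ with $g^{x_0^n} \equiv a^k \pmod p$, the total number of solutions in $\{1,\ldots,p^em\}$ is $\sum_{x_0=0}^{m-1} c_{x_0}$, which is precisely the number of pairs $(x_0,x)$ counted in Lemma~\ref{countmodp}, namely $N = \frac{m\cdot\gcd(k,p-1)}{q_1^{\lceil \alpha_1/n\rceil}q_2^{\lceil \alpha_2/n\rceil}\cdots q_i^{\lceil \alpha_i/n\rceil}}$.

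The main obstacle I expect is the step identifying one $p$-adic root $\xi_a$ with exactly one residue class modulo $p^e$, i.e.\ upgrading the "$\Z_p$-uniqueness" in Theorem~\ref{Hensel} to a count of solutions modulo $p^e$; the quantitative Hensel/Newton estimate above is the clean way to do this, and it is where the hypothesis $p \nmid k$ (used in Lemma~\ref{lifting} to make $\phi_{x_0}'(a)$ a unit) is essential. A secondary point requiring care is the convergence of $\exp_p$ on $p^{e+1}\Z_p$ at small primes, but this holds for every $p \ge 3$, which is the standing assumption.
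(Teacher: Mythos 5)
Your proof is correct and follows essentially the same route as the paper: Lemma~\ref{countmodp} for the count modulo $p$, Lemma~\ref{lifting} for the Hensel lift, the Chinese remainder theorem, and Theorem~\ref{interp} to return to the original congruence. The one place you go beyond the paper's own write-up is the quantitative Newton estimate showing that every residue $r$ with $\phi_{x_0}(r)\equiv 0 \pmod{p^e}$ must be congruent to the exact $\Zp$-root modulo $p^e$ (so each lifted root accounts for exactly one solution, and no extra ones appear); the paper leaves this step implicit, and making it explicit is a genuine improvement in rigor.
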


\begin{proof}

We begin by considering the number of solutions modulo $p$ to a slightly different equation. By Lemma \ref{countmodp}, we have $\frac{m\cdot\gcd(k,p-1)}{q_1^{\lceil{\frac{\alpha_1}{n}}\rceil}q_2^{\lceil{\frac{\alpha_2}{n}}\rceil}\cdots q_i^{\lceil{\frac{\alpha_i}{n}}\rceil}}$ solution pairs $(x_0,x_1)$ to 
$g^{x_0^n} \equiv x_1^k \pmod{p}$ where the $x_0$ are distinct $\pmod{m}$ and $x_1$ are distinct $\pmod{p}$. For each $x_1$ that appears in a solution pair to $g^{x_0^n} \equiv x_1^k \pmod{p}$, then by Lemma \ref{lifting} we have a unique solution $x'$ in $\Zp$ to $\omega(g)^{x_0^n} \oneunit{g}^{(x')^n} = (x')^k$ where $x' \equiv x \pmod{p}$.
By the Chinese Remainder Theorem, we have that there is exactly one $x \in \Z/mp^e\Z$ where $x \equiv x_0 \pmod{m}$ and $x \equiv x' \pmod{p^e}$. Thus by Theorem \ref{interp} we have exactly one solution to $g^{x^n} \equiv x^k \pmod{p^e}$ in $\Z/mp^e\Z$ for every solution pair to $g^{x_0^n} \equiv x^k \pmod{p}$, and therefore there are  $\frac{m\cdot\gcd(k,p-1)}{q_1^{\lceil{\frac{\alpha_1}{n}}\rceil}q_2^{\lceil{\frac{\alpha_2}{n}}\rceil}\cdots q_i^{\lceil{\frac{\alpha_i}{n}}\rceil}}$ solutions in $\Z/mp^e\Z$ to the equation $g^{x^n} \equiv x^k \pmod{p^e}$.

\end{proof}

We find that this theorem is consistent with our results. For example, looking at $g^{x^n} \equiv x^k \pmod{7^e}$ for $0 \le x \le m\cdot 7^e$, we get the following number of solutions for all $n$ and $e$.

\begin{table}[h]\label{table1}
\caption{$g^{x^n} \equiv x^k \pmod{7^e}$ for $0\le x < m\cdot7^e$}
\begin{tabular}{| p{6mm} p{6mm} | p{2.4cm} p{2.4cm} p{2.4cm} p{2.4cm}|}
\hline
\textbf{g} & \textbf{m} & \textbf{\# solns: k=1} & \textbf{\# solns: k=2} & \textbf{\# solns: k=3} & \textbf{\# solns: k=4}  \\
\hline
\textbf{1} & \textbf{1} & 1 & \cellcolor{lightgray} 2 & \cellcolor{lightgray} 3 & \cellcolor{lightgray} 2 \\
\textbf{2} & \textbf{3} & 3 & \cellcolor{lightgray} 6 & 3 & \cellcolor{lightgray} 6\\
\textbf{3} & \textbf{6}& 6  & 6 & 6 & 6\\
\textbf{4} & \textbf{3} & 3 & \cellcolor{lightgray} 6 & 3 & \cellcolor{lightgray} 6\\
\textbf{5} & \textbf{6} & 6  & 6 & 6 & 6\\
\textbf{6} & \textbf{2} & 2 & 2 & \cellcolor{lightgray} 6 & 2\\
\hline
\end{tabular}

\end{table}
So if we look at the case when $k=4$, we find that
$$d=\frac{\gcd(k,p-1)}{\gcd(k,\frac{p-1}{m})} = \frac{\gcd(4,7-1)}{\gcd(2,\frac{7-1}{m})} = \frac{2}{\gcd(4,\frac{6}{m})} = 
\begin{cases}
1 &  \mbox{if } 2\nmid m \\
2 &  \mbox{if } 2 \mid m
\end{cases},
$$ 
and
$$N=
\begin{cases}
\frac{m\gcd(4,7-1)}{1} = 2m & \mbox{if } 2\nmid m \\
\frac{m\gcd(4,7-1)}{2} = m & \mbox{if } 2\mid m 
\end{cases},
$$
which matches the findings in Table \ref{table1}.


\subsection{Counting solutions when $p=k$ and $n=1$}
Our findings for when $p=k$ differs from our results when $p \nmid k$. For example, when $p=11$, we find the number of solutions detailed in Table \ref{table2}. We see that our $N$ solutions modulo $p$ lift to different numbers of solutions modulo $p^e$ than in the $p \nmid k$ case. This suggests that we must lift solutions modulo $p$ to solutions modulo $p^e$ differently: we will end up using induction on $e$. So, we will count solutions modulo $p^2$ and use that as the base case in our induction.

\begin{table}[h]\label{table2}
\begin{tabular}{| p{7mm} p{7mm} | p{1.5cm} p{1.5cm} p{1.5cm} p{1.5cm}|}
\hline
\textbf{g} & \textbf{m} & \textbf{\# solns: e=1} & \textbf{\# solns: e=2} & \textbf{\# solns: e=3} & \textbf{\# solns: e=4}  \\
\hline
\rowcolor{lightgray}
\textbf{1} & \textbf{1} & 1 &  11 & 11 & 11  \\
\textbf{2} & \textbf{10} &10 & 0 & 0 & 0\\
\rowcolor{lightgray}
\textbf{3} & \textbf{5} & 5 & 55 & 55 & 55\\
\textbf{4} & \textbf{5} & 5 & 0 & 0 & 0\\
\textbf{5} & \textbf{5} &  5 & 0 & 0 & 0\\
\textbf{6} & \textbf{10} &10 & 0 & 0 & 0\\
\textbf{7} & \textbf{10} &10 & 0 & 0 & 0\\
\textbf{8} & \textbf{10} &10 & 0 & 0 & 0\\
\rowcolor{lightgray}
\textbf{9} & \textbf{5} & 5 & 55 & 55 & 55\\
\textbf{10} & \textbf{2} & 2 & 0 & 0 & 0 \\
\hline
\end{tabular}
\caption{$g^x \equiv x^{11} \pmod{11^e}$ for $0\le x < m\cdot11^e$}
\end{table}

As we lift, we find that the value of $g^{p-1}$ modulo $p^2$ is important.
By Fermat's Little Theorem, we have for prime $p$ and $p \nmid g$, that $g^{p-1} \equiv 1 \pmod{p}$. Looking at this equivalence modulo $p^2$ gives the following definition.

\begin{definition}
An integer $g$ is called a Wieferich base modulo $p$ if $g^{p-1} \equiv 1 \pmod{p^2}$.
\end{definition}

Now we are able to count solutions to $g^x \equiv x^p \pmod{p^2}$, seeing that the result depends heavily on whether $g$ is a Wieferich base modulo $p$.

\begin{lemma}\label{liftp2}
Let $p \not = 2$, let $a_0$ be a solution to $g^x \equiv x^p \pmod{p}$, and let $x_0 \equiv a_0 \pmod{m}$.
Then the following are equivalent:
\begin{enumerate}
\item $g = \omega(g)\oneunit{g}$, where $\oneunit{g} \equiv 1 \pmod{p^2}$. \label{sg1is0}
\item $g$ is a Wieferich base modulo $p$ \label{sWieferich}
\item  $a_0$ lifts to at least one solution $a \in \Z/p^2\Z$ to $g^{x} \equiv x^p \pmod{p^2}$ where $a \equiv a_0 \pmod{p}$ and $a \equiv x_0 \pmod{m}$.  \label{sLifts}
\end{enumerate}
Furthermore, in \ref{sLifts}, we also have that if $a_0$ lifts to a solution in $\Z/p^2\Z$, it lifts to $p$ distinct solutions in $\Z/p^2\Z$.

\end{lemma}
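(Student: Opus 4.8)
The plan is to prove \ref{sg1is0}$\Leftrightarrow$\ref{sWieferich} by a short $p$-adic computation, then to show \ref{sLifts} is equivalent to \ref{sg1is0} and extract the ``$p$ solutions'' claim along the way. For the latter I would expand both sides of $g^{x}\equiv x^{p}\pmod{p^{2}}$ at a candidate lift and isolate an explicit obstruction that vanishes exactly under the Wieferich condition.

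For \ref{sg1is0}$\Leftrightarrow$\ref{sWieferich}, write $\oneunit{g}=1+cp$ with $c\in\Zp$. Since $\omega(g)\in\mu_{p-1}$ we have $\omega(g)^{p-1}=1$, so $g^{p-1}=\oneunit{g}^{p-1}$; expanding $(1+cp)^{p-1}$ by the binomial theorem and discarding the terms lying in $p^{2}\Zp$ gives $g^{p-1}\equiv 1-cp\pmod{p^{2}}$. Hence $g^{p-1}\equiv 1\pmod{p^{2}}\iff p\mid c\iff\oneunit{g}\equiv 1\pmod{p^{2}}$, which is \ref{sWieferich}$\Leftrightarrow$\ref{sg1is0}.

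For the equivalence with \ref{sLifts}: by the Chinese Remainder Theorem (using $\gcd(m,p)=1$), the classes $a$ satisfying $a\equiv a_{0}\pmod p$ and $a\equiv x_{0}\pmod m$ form exactly $p$ classes modulo $mp^{2}$, represented modulo $p^{2}$ by $a_{0},a_{0}+p,\dots,a_{0}+(p-1)p$. The crucial observation — and the reason the lemma restricts to $n=1$ — is that modulo $p^{2}$ neither side of the congruence depends on the chosen lift: for $a=a_{0}+pt$ one has $a^{p}\equiv a_{0}^{p}\pmod{p^{2}}$, while, since $\omega(g)^{m}=\omega(g^{m})=1$ forces $\omega(g)^{a}=\omega(g)^{x_{0}}$,
$$g^{a}=\omega(g)^{a}\oneunit{g}^{a}=\omega(g)^{x_{0}}(1+cp)^{a}\equiv\omega(g)^{x_{0}}(1+a_{0}cp)\pmod{p^{2}}.$$
Thus there are either $0$ or $p$ lifts, and I would decide which by identifying the constant $\omega(g)^{x_{0}}$. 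From $g^{a_{0}}\equiv a_{0}^{p}\equiv a_{0}\pmod p$ and $g^{a_{0}}\equiv\omega(g)^{x_{0}}\pmod p$ we get $\omega(g)^{x_{0}}\equiv a_{0}\pmod p$; since $\omega(g)^{x_{0}}\in\mu_{p-1}$, this pins it down as $\omega(a_{0})$, which satisfies $\omega(a_{0})=\omega(a_{0})^{p}\equiv a_{0}^{p}\pmod{p^{2}}$. Substituting,
$$g^{a}-a^{p}\equiv\omega(a_{0})(1+a_{0}cp)-a_{0}^{p}\equiv\omega(a_{0})\,a_{0}\,c\,p\pmod{p^{2}},$$
and since $a_{0}$ is a unit modulo $p$ (because $a_{0}^{p}\equiv g^{a_{0}}\pmod p$ is, as $p\nmid g$), this vanishes modulo $p^{2}$ precisely when $p\mid c$, i.e. when \ref{sg1is0} holds. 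When it holds, all $p$ candidates are genuine solutions, giving the ``furthermore''; when it fails, none is.

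I expect the main difficulty to be the bookkeeping at level $p^{2}$: one must check that the constant part $\omega(g)^{x_{0}}$ matches $a_{0}^{p}$ modulo $p^{2}$ and not merely modulo $p$, so that the Wieferich defect $c$ is exactly the term that survives, and one must confirm that the side condition $a\equiv x_{0}\pmod m$ neither creates nor destroys lifts — which is where $\gcd(m,p)=1$ enters. The hypothesis $n=1$ is essential: it makes the exponent of $\oneunit{g}$ linear in the lift parameter $t$, so that $\oneunit{g}^{a}$ collapses modulo $p^{2}$ to something independent of $t$; for general $n$ that collapse fails.
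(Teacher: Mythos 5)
Your proof is correct, and it follows the same overall strategy as the paper --- expand $g^a - a^p$ modulo $p^2$ for a candidate lift $a = a_0 + tp$ and identify the obstruction as (a unit times) $a_0(\oneunit{g}-1)$ --- but two of your technical moves are genuinely different and arguably cleaner. First, you work directly with the integer binomial expansion of $(1+cp)^a$, whereas the paper routes everything through $\exp_p$ and $\log_p\oneunit{g}$ and then has to argue $\log_p\oneunit{g} \equiv \oneunit{g}-1 \pmod{p^2}$; your version avoids the power series entirely. Second, and more substantively, the crux of the argument is showing that the ``constant term'' $\omega(g)^{x_0} - a_0^p$ vanishes modulo $p^2$ and not merely modulo $p$, so that only the Wieferich defect survives. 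The paper proves this by writing $\omega(g)^{x_0} - a_0^p = rp$, raising to the $(p-1)$-st power, and expanding to force $p \mid r$; you instead observe that $\omega(g)^{x_0}$ and $a_0^p$ are both congruent to the Teichm\"uller representative $\omega(a_0)$ modulo $p^2$ --- the former because $\mu_{p-1}$ injects under reduction modulo $p$, the latter because $a_0^p = \omega(a_0)^p\oneunit{a_0}^p \equiv \omega(a_0) \pmod{p^2}$. That is a slicker and more conceptual route to the same fact. Your direct proof of the equivalence of (1) and (2) is the paper's computation for $(2)\Rightarrow(1)$ run in both directions at once, and your count of $p$ lifts via the Chinese Remainder Theorem matches the paper's. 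One small quibble: your closing assertion that the collapse of $\oneunit{g}^{a^n}$ modulo $p^2$ would fail for general $n$ is not right --- since $(a_0+tp)^n \equiv a_0^n \pmod{p}$, one still gets $\oneunit{g}^{a^n} \equiv 1 + a_0^n cp \pmod{p^2}$ independently of $t$ --- but this is only a side remark about the hypothesis and does not affect the validity of your proof of the lemma as stated.
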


\begin{proof}
For this proof, we begin by finding a congruence that holds exactly when we have a solution $a$ to $g^x \equiv x^p \pmod{p^2}$ that satisfies the conditions that $a \equiv x_0 \pmod{m}$ and $a \equiv a_0 \pmod{p}$. We will then use the equivalent statement to prove \ref{sLifts} $\implies$ \ref{sWieferich} and 
\ref{sg1is0} $\implies$ \ref{sLifts}, and then finish by showing \ref{sWieferich} $\implies$ \ref{sg1is0}.

Let $a \equiv x_0 \pmod{m}$ and $a \equiv a_0 \pmod{p}$, and consider when $0 \equiv g^a - a^p \pmod{p^2}$. 

Recall from Theorem~\ref{interp} that $ g^a = \omega(g)^{x_0}\oneunit{g}^a$ when $a \equiv x_0 \pmod{m}$.  
So since $a \equiv x_0 \pmod{m}$, we have
\begin{eqnarray}
0 \equiv g^{a} - a^p &\equiv& \omega(g)^{x_0}\oneunit{g}^a - a^p  \nonumber \\
 &\equiv& \omega(g)^{x_0}\sum_{i=0}^{\infty}\Big ( \frac{a^i(\log_p\oneunit{g})^i}{i!}\Big )-a^p \pmod{p^2} .\label{eq:1}
\end{eqnarray}
We have $a \equiv a_0 \pmod{p}$, so we get $a \equiv a_0 + a_1p \pmod{p^2}$, and thus equation (\ref{eq:1}) holds exactly when we get
\begin{eqnarray*}
\omega(g)^{x_0}\sum_{i=0}^{\infty}\Big ( \frac{(a_0+a_1p)^i(\log_p\oneunit{g})^i}{i!}\Big )-(a_0+a_1p)^p &\equiv& 0 \pmod{p^2}.
\end{eqnarray*}
Since $\log_p{\oneunit{g}} \in p\Zp$, this reduces to
\begin{eqnarray}
\omega(g)^{x_0}(1+a_0\log_p{\oneunit{g}})-(a_0+a_1p)^p &\equiv& 0 \pmod{p^2}.
\end{eqnarray}
When we expand the term $(a_0+a_1p)^p$ modulo $p^2$, we find that it is congruent to $a_0^p$, and we obtain
\begin{eqnarray}
\omega(g)^{x_0}(1+a_0\log_p{\oneunit{g}})-a_0^p &\equiv& 0 \pmod{p^2}. \label{eqn:3}
\end{eqnarray}
Note that 
$$ \log_p{\oneunit{g}} = \sum_{i=0}^{\infty}\Big ( (-1)^{i+1} \frac{(\oneunit{g}-1)^i}{i} \Big ),$$
and since $\oneunit{g}-1 \in p\Zp$, we have 
$$ \log_p{\oneunit{g}} \equiv \oneunit{g}-1 \pmod{p^2}.$$
So then we can replace $\log_p{\oneunit{g}}$ in equation (\ref{eqn:3}) to obtain
\begin{eqnarray}
\omega(g)^{x_0}(1&+&a_0(\oneunit{g}-1))-a_0^p \nonumber \\
&\equiv& \omega(g)^{x_0}a_0(\oneunit{g}-1)+\omega(g)^{x_0}-a_0^p \equiv 0 \pmod{p^2}. \label{eqn:4}
\end{eqnarray}
We have that $\oneunit{g}-1 \equiv 0 \pmod{p}$. We also know
\begin{equation}
g^{a_0} - a_0^p \equiv \omega(g)^{a_0}\oneunit{g}^{a_0}-a_0^p \equiv \omega(g)^{x_0} - a_0^p \equiv 0 \pmod{p},
\end{equation}\label{eqn: 5} 
so we can write equation (\ref{eqn:4}) as
\begin{eqnarray}
\frac{\omega(g)^{x_0}a_0(\oneunit{g}-1)}{p}+\frac{\omega(g)^{x_0}-a_0^p}{p} &\equiv& 0 \pmod{p}. \label{eqn:6}
\end{eqnarray}

Now consider the following: by (\ref{eqn: 5}) and Fermat's Little Theorem we have
$$\omega(g)^{x_0} - a_0^p \equiv \omega(g)^{x_0} - a_0 \equiv 0 \pmod{p}.$$
By definition, this is true exactly when
$$\omega(g)^{x_0} - a_0^p = rp \mbox{, for some } r \in \Z_p.$$
Now this is true if and only if
$$(\omega(g)^{x_0})^{p-1} = (a_0^p+rp)^{p-1}.$$
Since $\omega(g)$ is a $(p-1)$th root of unity, we find that $(\omega(g)^{x_0})^{p-1} = 1$. By using this fact and expanding $ (a_0^p+rp)^{p-1}$, we find that the above is equivalent to
$$ 1 = \sum_{i=1}^{p-1}\Big ( \binom{p-1}{i}(rp)^i(a_0^p)^{p-1-i}\Big ).$$
Examining this equation modulo $p^2$, we find that
$$1 \equiv a_0^{p(p-1)}+p(p-1)ra_0^{p(p-2)} \pmod{p^2},$$
and since the order of the group $\Z/p^2\Z$ is $p(p-1)$, we get $a_0^{p(p-1)} \equiv 0 \pmod{p^2}$, so
$$1 \equiv a_0^{p(p-1)}+p(p-1)ra_0^{p(p-2)} \pmod{p^2},$$
and thus
$$0 \equiv (p-1)ra_0^{p(p-2)} \pmod{p}.$$
Since $p-1, a_0 \not\equiv 0 \pmod{p}$ (if $a_0 \equiv 0 \pmod{p}$, then $1 \equiv g^0 \equiv 0^p \equiv 0 \pmod{p}$), we must have $r \equiv 0 \pmod{p}$. So then 
$\omega(g)^{x_0} - a_0^p = (kp)p  \equiv 0 \pmod{p^2}$ for some $k \in \Z$.

Thus we always have that $p \mid \frac{\omega(g)^{x_0}-a_0^p}{p}$, and we can reduce equation (\ref{eqn:6}) further:
\begin{eqnarray}
\frac{\omega(g)^{x_0}a_0(\oneunit{g}-1)}{p}+\frac{\omega(g)^{x_0}-a_0^p}{p}  
\equiv \frac{\omega(g)^{x_0}a_0(\oneunit{g}-1)}{p} \equiv 0 \pmod{p}. \label{eqn:7}
\end{eqnarray}

Now we have that $a$ solves $g^a \equiv a^p \pmod{p^2}$ if and only if the above equivalence holds, and we continue with our proof. 

\ref{sg1is0} $\implies$ \ref{sLifts}:

Assuming \ref{sg1is0}, we have that $\oneunit{g} \equiv 1 \pmod{p^2}$. So then $p^2 \mid (\oneunit{g} - 1)$, and so we get 
\begin{eqnarray*}
\frac{\omega(g)^{x_0}a_0(\oneunit{g}-1)}{p} &\equiv& 0 \pmod{p}. \\
\end{eqnarray*}

Thus all $p$ choices for $a_1 \in \{0,1,\ldots,p-1\}$ give a solution $a \equiv a_0 + a_1p \pmod{p^2}$ to $g^{x} \equiv x^p \pmod{p^2}$ whenever $a \equiv a_0 \pmod{p}$ and $a \equiv x_0 \pmod{m}$. By the Chinese Remainder Theorem, we have exactly one $a \in \{0,1,\ldots,mp^2\}$ where both $a \equiv a_0+a_1p \pmod{p^2}$ and $a \equiv x_0 \pmod{m}$ are satisfied. Since there are $p$ distinct choices for $a_1$, we have $p$ solutions to $g^{x} \equiv x^p \pmod{p^2}$ in $\{0,1,\ldots, mp^2\}$.

\ref{sLifts} $\implies$ \ref{sWieferich}:

Assuming \ref{sLifts}, we know there is at least one $a$ solving
$\frac{\omega(g)^{x_0}a_0(\oneunit{g}-1)}{p} \equiv 0 \pmod{p} $.
 We know by equation (\ref{eqn: 5}) and Fermat's Little Theorem that \\
$\omega(g)^{x_0} \equiv a_0^p \equiv a_0 \pmod{p}$, and since $\omega(g) \not \equiv 0 \pmod{p}$, then $p \nmid a_0$.

Thus we must have $p^2 \mid (\oneunit{g}-1)$, so then 
$$g^{p-1} \equiv \omega(g)^{p-1}\oneunit{g} ^{p-1} \equiv \omega(g)^{p-1}1 ^{p-1} \equiv 1 \pmod{p^2},$$
which means $g$ is a Wieferich base modulo $p$.

\ref{sWieferich} $\implies$ \ref{sg1is0}:

Assuming \ref{sWieferich}, we have that $g^{p-1} \equiv 1 \pmod{p^2}$. Write $\oneunit{g} = 1 + g_1p$. Then we have
$$1 \equiv g^{p-1} \equiv \omega(g)^{p-1}\oneunit{g}^{p-1} \equiv \oneunit{g}^{p-1} \equiv (1+g_1p)^{p-1} \pmod{p^2}.$$
Expanding $(1+g_1p)^{p-1}$ we get
$$ 1 \equiv \sum_{i=0}^{i=p-1}\binom{p-1}{i}(g_1p)^i \equiv 1 + (p-1)g_1p  \pmod{p^2}.$$
So then we have $0 \equiv (p-1)g_1p \pmod{p^2}$, and dividing through by $p$, we obtain
$$(p-1)g_1 \equiv 0 \pmod{p}.$$
Since $p-1 \not \equiv 0 \pmod{p}$, we must have $g_1 \equiv 0 \pmod{p}$.
Thus $$\oneunit{g} \equiv 1 + g_1p \equiv 1 \pmod{p^2}.$$

\end{proof}

The last theorem we give uses our previous lemma as a base case to count solutions to $g^x \equiv x^p \pmod{p^e}$ for $e>1$.

\begin{theorem}
For $p \not = 2$, let $g \in \Zp^\times$ be fixed. Let $N$ be the same as in Theorem \ref{pnmidk_final}. 
Then there are $N$ solutions $x$ to the equation 
$g^{x} \equiv x^p \pmod{p}.$
Furthermore, for $e>1$,  the equation
$g^{x} \equiv x^p \pmod{p^e}$
has $Np$ solutions $x$
if $g^{p-1} \equiv 1 \pmod{p^2}$ (i.e. $g$ is a Wieferich base modulo $p$),
and no solutions otherwise.

\end{theorem}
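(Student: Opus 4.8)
The plan is to prove the two assertions separately, using Lemma~\ref{countmodp} for the modulo-$p$ count and Lemma~\ref{liftp2} together with an induction on $e$ for the prime-power count.

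First I would dispatch the $e=1$ case. Setting $n=1$ and $k=p$ in Lemma~\ref{countmodp}, the equation $g^{x_0} \equiv x^p \pmod{p}$ has exactly $N = \frac{m\cdot\gcd(p,p-1)}{\prod_j q_j^{\lceil \alpha_j/1\rceil}} = \frac{m\cdot\gcd(p,p-1)}{d}$ solution pairs $(x_0,x)$, where $d = \frac{\gcd(p,p-1)}{\gcd(p,\frac{p-1}{m})}$; since $\gcd(p,p-1)=1$, this is just $N = m$ pairs, and because $n=1$ each $x_0 \in \{0,\dots,m-1\}$ pairs with exactly one $x$, so there are $N=m$ solutions $x$ to $g^x \equiv x^p \pmod p$. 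One must observe that this agrees with ``$N$ the same as in Theorem~\ref{pnmidk_final}'': the formula there with $n=1,k=p$ likewise collapses to $m$, so the two values of $N$ genuinely coincide and there is no inconsistency in the statement.

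Next I would handle the base case $e=2$. By Lemma~\ref{liftp2}, the equivalence \ref{sLifts}$\iff$\ref{sWieferich} says that \emph{some} modulo-$p$ solution $a_0$ lifts to $\Z/p^2\Z$ exactly when $g$ is a Wieferich base modulo $p$; and since being a Wieferich base is a condition on $g$ alone (equivalently $\oneunit{g}\equiv 1\pmod{p^2}$ by \ref{sg1is0}), it does not depend on the particular $a_0$, so in the Wieferich case \emph{every} one of the $N$ modulo-$p$ solutions lifts, each to exactly $p$ solutions modulo $p^2$ (using the ``furthermore'' clause of Lemma~\ref{liftp2} plus CRT to combine the $\pmod{p^2}$ and $\pmod m$ conditions), giving $Np$ solutions in $\Z/mp^2\Z$; and in the non-Wieferich case none of them lift, giving $0$. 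For the inductive step $e \to e+1$ with $g$ a Wieferich base, I would argue that $\oneunit{g}\equiv 1\pmod{p^2}$ forces, via the interpolation $g^x = \omega(g)^{x_0}\oneunit{g}^x$ of Theorem~\ref{interp} and the expansion $(a + a_{e-1}p^{e-1})^p \equiv a^p \pmod{p^e}$ (valid since $p\geq 3$ so $2(e-1)\geq e$), that the congruence defining a lift of a solution $a$ modulo $p^{e-1}$ to one modulo $p^e$ reduces to a condition of the form $\frac{\omega(g)^{x_0}a(\oneunit{g}-1)}{p} \equiv 0 \pmod{p}$ or a triviality, independent of the free digit $a_{e-1}$, exactly as in the $e=2$ computation; hence each solution modulo $p^{e-1}$ lifts to $p$ solutions modulo $p^e$, and the count $Np^{e-1}$ at level $e-1$ is really ``$Np$ solutions in $\Z/mp^e\Z$'' after the period $mp^e$ is accounted for — matching the statement, which fixes the window $0\le x< m\cdot p^e$ rather than letting it grow.

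The main obstacle I expect is the inductive step: Lemma~\ref{liftp2} is stated only for lifting from $p$ to $p^2$, so I need to verify that the key simplifications in its proof — namely $\log_p\oneunit{g} \equiv \oneunit{g}-1 \pmod{p^2}$ replaced by the analogous truncation modulo $p^e$, the vanishing of the ``cross term'' $\frac{\omega(g)^{x_0}-a^p}{p^{e-1}}$ modulo $p$ (which used Fermat's little theorem and a Teichmüller argument), and the binomial collapse of $(a+a_{e-1}p^{e-1})^p$ — all still go through at level $e$. The cleanest route is probably to reprove Lemma~\ref{liftp2} in the more general form ``a solution modulo $p^{e-1}$ lifts to $p$ solutions modulo $p^e$ iff $g$ is a Wieferich base'' by essentially the same manipulations, and then the theorem follows by a one-line induction; alternatively one can phrase the induction so that the Wieferich hypothesis guarantees $\oneunit{g}=1$ to enough $p$-adic precision that the $p$-adic solution $x'$ from a suitable analogue of Lemma~\ref{lifting} exists outright, but since $p\mid k$ here Hensel's lemma in the form of Theorem~\ref{Hensel} does not apply directly, so the explicit digit-by-digit lifting is the safer path.
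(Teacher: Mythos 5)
Your $e=1$ and $e=2$ cases are fine and agree with the paper's argument (Lemma~\ref{countmodp} plus CRT, then Lemma~\ref{liftp2}). The inductive step, however, contains a genuine error: you assert that when $g$ is a Wieferich base, \emph{every} solution modulo $p^{e-1}$ lifts to $p$ solutions modulo $p^{e}$, and you propose to prove a generalized Lemma~\ref{liftp2} saying exactly that. That statement is false for $e\geq 3$, and it would yield $Np^{e-1}$ solutions in $\{1,\dots,mp^{e}\}$ rather than the stated $Np$. The data in Table~\ref{table2} already rules it out: for $p=11$, $g=3$ one has $N=5$ and the count is $55=Np$ at $e=2,3,4$, not $605=Np^{2}$ at $e=3$. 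Your attempted rescue via ``the period $mp^{e}$ is accounted for'' does not help, because the theorem counts solutions over one full period $mp^{e}$ at every level; if each of the $Np$ solution classes modulo $mp^{e-1}$ really produced $p$ genuine solutions modulo $mp^{e}$, the total over that window would necessarily be $Np^{2}$.

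What actually happens is more delicate. Write a level-$(e-1)$ solution as $a\equiv a'+a_{e-2}p^{e-2}\pmod{p^{e-1}}$ and a candidate lift as $x\equiv a+a_{e-1}p^{e-1}\pmod{p^{e}}$. You are right that the congruence $f_{x_0}(x)\equiv 0\pmod{p^{e}}$ does not involve $a_{e-1}$, so for a given $a$ either all $p$ lifts work or none do. But the congruence is a \emph{nontrivial linear condition on the previous digit} $a_{e-2}$: after dividing through by $p^{e-1}$, the coefficient of $a_{e-2}$ is
$$\frac{-\bigl(\omega(g)^{x_0}p^{e-2}\log_p\oneunit{g}-(a')^{p-1}p^{e-1}\bigr)}{p^{e-1}}\equiv (a')^{p-1}\equiv 1\pmod{p},$$
since the $\log_p\oneunit{g}$ contribution vanishes modulo $p$ precisely because $\oneunit{g}\equiv 1\pmod{p^2}$. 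Being a unit, this forces a unique choice of $a_{e-2}$, so only one in every $p$ of the level-$(e-1)$ solutions lifts, and each survivor lifts to $p$ solutions at level $e$; the count is therefore preserved at $Np$, not multiplied by $p$. The correct induction invariant is ``there are $Np$ solutions modulo $p^{e-1}$, of which exactly $N$ admit lifts, each contributing $p$ solutions modulo $p^{e}$.'' The case $e=2$ is special: there the analogous condition on $a_0$ is automatically satisfied because $p^{2}\mid(\oneunit{g}-1)$, which is why all $N$ mod-$p$ solutions lift; that automatic vanishing does not persist to higher $e$, so the generalized lemma you plan to prove is false and the induction must be rebuilt around the digit-pinning mechanism above.
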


\begin{proof}
First consider when $e=1$. We have by Lemma \ref{countmodp} that there are $N$ solution pairs $(x_0,x_1) \in \Z/m\Z \times \Z/p\Z$ to $g^{x_0} \equiv x_1^p \pmod{p}$. By the Chinese Remainder Theorem, there is exactly one $x \in \Z/mp\Z$ where $x \equiv x_0 \pmod{m}$ and $x \equiv x_1 \pmod{p}$, so there is exactly one solution $x \in \Z/mp\Z$ where $g^x\equiv g^{x_0} \equiv x_1^p \equiv x^p \pmod{p}$. Since there are $N$ solution pairs, then there are $N$ solutions to $g^x \equiv x^p \pmod{p}$.

Now consider when $e=2$. If $g$ is a Wieferich base modulo $p$, then for each of the $N$ solutions above, we let $a_0$ be a solution and find that 
 we have $p$ solutions to $g^x \equiv x^p \pmod{p^2}$ by Lemma \ref{liftp2}. Since this holds for all our $N$ solutions modulo $p$, we have $Np$ total solutions to $g^x \equiv x^p \pmod{p^2}$.

If $g$ is not a Wieferich base modulo $p$, then by Lemma \ref{liftp2}, none of the $N$ solutions we found modulo $p$ 
lift to a solution to $g^x \equiv x^p \pmod{p^2}$, so there cannot be any solutions modulo $p^2$. Furthermore, there cannot be any solutions to $g^x \equiv x^p \pmod{p^e}$ for $e\geq3$.

When $g$ is a Wieferich base modulo $p$ and $e>1$, we use induction. The base case ($e=2$) is given above, and note that the solutions modulo $p^2$ take the form $a_0 + a_{1}p \pmod{p^2}$, where $a_{1}$ takes any value in $\Z/p\Z$.
Let $$f_{x_0}(x) = F_{x_0}(x)-g^p =\omega(g)^{x_0}\oneunit{g}^{x} - x^p = \omega(g)^{x_0}\Big (\sum_{i=0}^{\infty}\frac{x^i(\log_p{\oneunit{g}})^i}{i!} \Big ) - x^p .$$

For the induction assumption, we assume that we have $Np$ solutions $a \in \Z/mp^{e-1}\Z$ s.t. $g^x -x^k \equiv 0 \pmod{p^{e-1}}$, written
 $a \equiv a_0 + a_1p+ \cdots +a_{e-2}p^{e-2} \equiv a'+a_{e-2}p^{e-2} \pmod{p^{e-1}}$, where $a_{e-2}$ can take any value modulo $p$.

Note that we have $\oneunit{g} \equiv 1 \pmod{p^2}$ by Lemma \ref{liftp2}.

We want to find a solution $x$ that solves $g^x \equiv x^p \pmod{p^e}$, so it must also solve $g^x \equiv x^p \pmod{p^{e-1}}$. So we must have $x \equiv a \pmod{p^{e-1}}$ for one of our solutions $a$. Thus we have that $x \equiv a+a_{e-1}p^{e-1} \pmod{p^e}$ for some $a$. Set $x_0 \equiv a \pmod{m}$.
Note that $f_{x_0}(x) = g^x-x^p$ when $x \equiv x_0 \pmod{m}$ by Theorem \ref{interp}, so $f_{x_0}(a) \equiv 0 \pmod{p^{e-1}}$.

So we have 
\begin{eqnarray*}
f_{x_0}(x) &\equiv& f_{x_0}(a + a_{e-1}p^{e-1}) \\
&\equiv& \omega(g)^{x_0}\Big (\sum_{i=0}^{\infty}\frac{(a+a_{e-1}p^{e-1})^i(\log_p{\oneunit{g}})^i}{i!} \Big ) - (a + a_{e-1}p^{e-1})^p \pmod{p^e}
\end{eqnarray*}

Since for all $ k \in \Z^+,$ $p\mid \frac{(\log_p{\oneunit{g}})^k}{k!}$ (by ~\cite[Lemma 4.5.4]{gouvea}), and because $a \equiv a'+a_{e-2}p^{e-2} \pmod{p^{e-1}}$, we can simplify to get
\begin{eqnarray*}
f_{x_0}(x) &\equiv& \omega(g)^{x_0}\Big (\sum_{i=0}^{\infty}\frac{(a'+a_{e-2}p^{e-2})^i(\log_p{\oneunit{g}})^i}{i!} \Big ) - (a' + a_{e-2}p^{e-2} + a_{e-1}p^{e-1})^p \\
&\equiv& \omega(g)^{x_0}\Big (1+(a'+a_{e-2}p^{e-2})\log_p{\oneunit{g}}+\sum_{i=2}^{e-1}\frac{(a')^i(\log_p{\oneunit{g}})^i}{i!} \Big ) \\
&& \hspace{4cm} - (a' + a_{e-2}p^{e-2} + a_{e-1}p^{e-1})^p \pmod{p^e} \\
\end{eqnarray*}

Note that 
\begin{eqnarray*}
(a' + a_{e-2}p^{e-2} &+& a_{e-1}p^{e-1})^p \\
&\equiv& \sum_{i=0}^{p}\binom{p}{i}(a'+a_{e-2}p^{e-2})^{p-i}(a_{e-1}p^{e-1})^i\\
&\equiv& (a' + a_{e-2}p^{e-2})^p \\
&\equiv& \sum_{i=0}^{p}\binom{p}{i}(a')^{p-i}(a_{e-2}p^{e-2})^i\\
&\equiv& (a')^p + (a')^{p-1}a_{e-2}p^{e-1} \pmod{p^e}.
\end{eqnarray*}

So now we have that, given a solution $x \equiv a \pmod{p^{e-1}}$, if it lifts to a solution modulo $p^e$, then it lifts to any $x \equiv a+a_{e-1}p^{e-1}$.
When we set $f_{x_0}(x) \equiv 0 \pmod{p^e}$ to solve for $x$, we get 
\begin{eqnarray*}
0 \equiv \omega(g)^{x_0}\Big (1+(a'+a_{e-2}p^{e-2})\log_p{\oneunit{g}}+\sum_{i=2}^{e-1}\frac{(a')^i(\log_p{\oneunit{g}})^i}{i!} \Big )\\
 - ((a')^p + (a')^{p-1}a_{e-2}p^{e-1}) \pmod{p^e},
\end{eqnarray*}
so collecting all the $a_{e-2}$ terms, we find that
\begin{eqnarray*}
-a_{e-2}(\omega(g)^{x_0}p^{e-2}\log_p{\oneunit{g}}&-&(a')^{p-1}p^{e-1}) \\
&\equiv& \omega(g)^{x_0} \Big ( 1+a'\log_p{\oneunit{g}}+ \sum_{i=2}^{e-2}\frac{(a')^i(\log_p{\oneunit{g}})^i  }{i!}\Big )-(a')^p \\
&& \hspace{1cm}+ \frac{\omega(g)^{x_0}(a')^{e-1}(\log_p{\oneunit{g}})^{e-1}}{(e-1)!} \pmod{p^e}.
\end{eqnarray*}
Note that $f_{x_0}(x) \equiv f_{x_0}(a) \equiv 0 \pmod{p^{e-1}}$ and so since $p^{e-1}$ divides the left hand side, $p^{e-1}$ must also divide the right hand side, and we also know $p^{e-1} \mid (\log_p{\oneunit{g}})^{e-1}).$
So we can write
\begin{eqnarray*}
&&a_{e-2}\frac{-(\omega(g)^{x_0}p^{e-2}\log_p{\oneunit{g}}-(a')^{p-1}p^{e-1})}{p^{e-1}}\\
&& \hspace{2.5cm}\equiv \frac{\omega(g)^{x_0} ( 1+a'\log_p{\oneunit{g}}+ \sum_{i=2}^{e-2}\frac{(a')^i(\log_p{\oneunit{g}})^i  }{i!} )-(a')^p}{p^{e-1}} \\
&&\hspace{5cm}+ \frac{\omega(g)^{x_0}(a')^{e-1}(\log_p{\oneunit{g}})^{e-1}}{(e-1)!p^{e-1}} \pmod{p}\\
\end{eqnarray*}

It suffices to show that $\frac{-(\omega(g)^{x_0}p^{e-2}\log_p{\oneunit{g}}-(a')^{p-1}p^{e-1})}{p^{e-1}} \not \equiv 0 \pmod{p}$, since doing so would mean it has an inverse modulo $p$ and we can solve uniquely for $a_{e-2}$. Then we would know that exactly one out of every $p$ solutions $a \equiv a'+a_{e-2}p^{e-2} \pmod{p^{e-1}}$ lifts to a solution modulo $p^e$.

If we assume it is congruent to $0$ modulo $p$, we have 
\begin{eqnarray*}
\frac{-(\omega(g)^{x_0}p^{e-2}\log_p{\oneunit{g}}-(a')^{p-1}p^{e-1})}{p^{e-1}} \equiv 0 \pmod{p} 
\end{eqnarray*}
so 
\begin{eqnarray*}
\frac{-(\omega(g)^{x_0}p^{e-2}\log_p{\oneunit{g}})}{p^{e-1}} \equiv (a')^{p-1} \equiv 1  \pmod{p}
\end{eqnarray*}
by Fermat's Little Theorem. But 
\begin{eqnarray*}
\frac{-(\omega(g)^{x_0}p^{e-2}\log_p{\oneunit{g}})}{p^{e-1}} \equiv \frac{-\omega(g)^{x_0}\log_p{\oneunit{g}}}{p} 
\equiv \frac{\omega(g)^{x_0}(\sum_{i=1}^{\infty} \frac{(-1)^{i+1}(\oneunit{g}-1)^i}{i})}{p} \pmod{p}
\end{eqnarray*}
and $\oneunit{g} \equiv 1 \pmod{p^2}$ so we get
\begin{eqnarray*}
\frac{\omega(g)^{x_0}(\sum_{i=1}^{\infty} \frac{(-1)^{i+1}(\oneunit{g}-1)^i}{i})}{p}  \equiv 0 \pmod{p}.
\end{eqnarray*}
This is a contradiction since $1 \not \equiv 0 \pmod{p}$. Thus we can solve uniquely for $a_{e-2}$, and for such an $a_{e-2}$, any $a_{e-1} \in \Z/p\Z$ solves $f_{x_0}(a+a_{e-1}p^{e-1}) \equiv 0 \pmod{p^e}$.

By our induction assumption, there are $Np$ solutions to $f_{x_0}(a) \equiv 0 \pmod{p^{e-1}}$, $a \equiv a'+a_{e-2}p^{e-2} \pmod{p^{e-1}}$. Since for each solution $a'$ to $f_{x_0}(a') \equiv 0 \pmod{p^{e-2}}$, $ a'+a_{e-2}p^{e-2}$ is a solution modulo $p^{e}$ for a unique $a_{e-2} \in \Z/p\Z$, we have exactly $N$ distinct $a$ that lift to $p$ solutions to $f_{x_0}(x) \equiv f_{x_0}(a+a_{e-1}p^{e-1}) \equiv 0 \pmod{p^{e}}$. This gives a total of $Np$ solutions modulo $p^e$.

\end{proof}

Again, we see that this is consistent with our results. For an example, we return to our results  when $p=11$ detailed in Table \ref{table2}.
If we look at just $g=3$ and $g=4$, we have that $N=5$ and so $Np=55$. We find that $3^{10} \equiv 1 \pmod{11^2}$  and $4^{10} \nmid 1 \pmod{11^2}$, so $g=3$ is a Wieferich base and has $55$ solutions modulo $11^e$ for $e >1$, which $g=4$ is not a Wieferich base modulo $11$ and thus has no solutions modulo $p^e$ for $e>1$. Both of these match our findings in Table \ref{table2}.

\section{Conclusions and Future Work}
In this paper, we have applied the methods found in \cite{hold_rob}, \cite{mann_yeoh}, and \cite{waldo_zhu} to count solutions to the equation $g^{x^n} \equiv x^k \pmod {p^e}$.  When analyzing the equation for $x \in \{1,2, \ldots, mp^e\}$, $p$ an odd prime, we have found an exact number of solutions for the case when $p \nmid k$, specifically $N = \frac{m\cdot\gcd(k,p-1)}{q_1^{\lceil{\frac{\alpha_1}{n}}\rceil}q_2^{\lceil{\frac{\alpha_2}{n}}\rceil}\cdots q_i^{\lceil{\frac{\alpha_i}{n}}\rceil}}$ solutions. In addition, we found that when $p$ odd, $k=p$ and $n=1$, that there are $N$ solutions when $e=1$, and either $Np$ or $0$ solutions when $e>1$, depending on whether $g$ is a Wieferich base modulo $p$.

It remains to be shown whether the same number of solutions is obtained for general $n$ in the $k=p$ case. We suspect that similarly to the $p \nmid k$ case, $n$ will only affect the value of $N$, and not the results of lifting solutions modulo $p$ to solutions modulo $p^e$, but this has not been confirmed. Additionally, the case where $p \mid k$ but $k \not = p$ has not been analyzed yet. Based on a few test results, we suspect that $k$ will affect the value of $N$, but that the number of solutions modulo $p^e$ for $e>1$ will be the same as in the $p=k$ case. Lastly, due to the fact that we need $x \in 1+4\Z_2$ rather than $x \in 1+2\Z_2$ for $\log_2(\exp_2(x))$ to converge, the case where $p=2$ must be analyzed differently. We have done some testing that confirms that $p=2$ yields different results than for odd prime $p$, leaving another avenue of analysis. 

Besides counting solutions, further analysis can be done by exploring the distribution of solutions for $x$ among intervals of length $p^e$ rather than focusing on only the longer interval of length $mp^e$, since that range is generally more applicable to cryptographic schemes. There has been some analysis of the map $x \mapsto g^{x^n} \pmod{c}$ when $n=2$ by Wood \cite{wood}, and some statistical analysis of the map when $n=1$ from \cite{lindle} and \cite{cloutier}, but more work remains to be done.


\begin{bibdiv}
\begin{biblist}

\bib{andrews}{book}{
	title = {Number Theory},
	author = {George E. Andrews},
	publisher = {Dover Publications, Inc.},
	year = {1994},
	month = {oct},
	isbn = {0486682528}
	
} 

\bib{bachman}{book}{
	title = {Introduction to $p$-adic Numbers and Valuation Theory},
	author = {George Bachman}
	publisher = {Academic Press Inc.},
	isbn = {64-17793}
	year = {1964},
	pages = {47}
}

\bib{camen_stad}{article}{
	title = {Efficient group signature schemes for large groups},
	author = {Jan Camenisch},
	author = {Markus Stadler},
	url = {https://link.springer.com/chapter/10.1007\%2FBFb0052252},
	journal = {Lecture Notes in Computer Science},
	volume = {1294},
	year = {2006},
	month = {may},
	pages = {410--424}
}

\bib{cloutier}{article}{
	title = {Mapping the Discrete Logarithm},
	author = {Daniel R. Cloutier},
	journal = {Mathematical Sciences Technical Reports (MSTR)},
	url = {http://scholar.rose-hulman.edu/math_mstr/49}, 
	year = {2005}
}

\bib{gouvea}{book}{
	edition = {2},
	title = {p-adic Numbers: An Introduction},
	isbn = {3540629114},
	shorttitle = {p-adic Numbers},
	publisher = {Springer},
	author = {Fernando Quadros Gouvea},
	month = {jul},
	year = {1997}
}

\bib{hold_rob}{article}{
	title = {Counting Fixed Points, Two Cycles, and Collisions of the Discrete Exponential Function Using $p$-adic Methods},
	author = {Joshua Holden},
        author = {Margaret M. Robinson},
	volume = {92},
	url = {http://journals.cambridge.org/abstract_S1446788712000262},
	doi = {10.1017/S1446788712000262},
	number = {2},
	journal = {Journal of the Australian Mathematical Society},
	year = {2012},
	pages = {163--178}
}

\bib{katok}{book}{
	volume = {37},
	title = {p-adic Analysis Compared with Real},
	isbn = {978-0-8218-4220-1},
	publisher = {American Mathematical Society},
	author = {Svetlana Katok},
	year = {2007}
}

\bib{lindle}{article}{
	title = {A Statistical Look at Maps of the Discrete Logarithm},
	author = {Nathan Lindle},
	journal = {Mathematical Sciences Technical Reports (MSTR)},
	url = {http://scholar.rose-hulman.edu/math_mstr/35}, 
	year = {2008}
}

\bib{mann_yeoh}{article}{
	title = {Deconstructing the Welch Equation Using $p$-adic Methods},
	author = {Abigail Mann},
        author = {Adelyn Yeoh},
	volume = {16},
	number = {1},
	url = {https://www.rose-hulman.edu/mathjournal/archives/2015/vol16-n1/paper1/v16n1-1pd.pdf},
	journal = {Rose-Hulman Undergraduate Mathematics Journal},
	year = {2015},
	pages = {1--23}
}

\bib{trappe_wash}{book}{
	title = {Introduction to Cryptography with Coding Theory},
	author = {Wade Trappe},
	author = {Lawrence C. Washnigton},
	isbn = {0131862391},
	publisher = {Pearson},
	year = {2006},
	edition = {2}
}

\bib{waldo_zhu}{article}{
	title = {The Discrete Lambert Map},
	author = {Anne Waldo},
	author = {Caiyun Zhu},
	volume = {16},
	number = {2},
	url = {https://www.rose-hulman.edu/mathjournal/archives/2015/vol16-n2/paper11/v16n2-11pd.pdf},
	journal = {Rose-Hulman Undergraduate Mathematics Journal},
	year = {2015},
	pages = {182--194}
}

\bib{wood}{article}{
	title = {The Square Discrete Exponentiation Map},
	author = {A. Wood},
	journal = {Mathematical Sciences Technical Reports (MSTR)},
	url = {http://scholar.rose-hulman.edu/math_mstr/9}, 
	year = {2011}

}

\end{biblist}
\end{bibdiv}

\end{document}